\def\R{\mathbb{R}}
\def\kasten{$~~\mbox{\hfil\vrule height6pt width5pt depth-1pt}$ }
\theoremstyle{plain}
\newtheorem{Theorem}{Theorem}[section]
\newtheorem{Corollary}[Theorem]{Corollary}
\newtheorem{Proposition}[Theorem]{Proposition}
\newtheorem{Lemma}[Theorem]{Lemma}
\newtheorem{Example}[Theorem]{Example}
\newtheorem{Remark}[Theorem]{Remark}
\newtheorem{Assumption}{Assumption}
\begin{document}

\def\kasten{$~~\mbox{\hfil\vrule height6pt width5pt depth-1pt}$ }
\centerline{\large \bf Maximum principles for nonlocal }
\centerline{\large \bf     parabolic Waldenfels operators }

\bigskip
\centerline{\bf
Qiao Huang$^{a,}\footnote{hq932309@hust.edu.cn}$,
Jinqiao Duan$^{a, b,}\footnote{duan@iit.edu}$ and
Jiang-Lun Wu$^{c,}\footnote{j.l.wu@swansea.ac.uk}$}
\smallskip
\centerline{${}^a$ Center for Mathematical Sciences }
\centerline{Huazhong University of Science and Technology}
\centerline{Wuhan, Hubei   430074, China}

\smallskip
\centerline{${}^b$ Department of Applied Mathematics, Illinois
Institute of Technology} \centerline{Chicago, IL 60616, USA}

\smallskip
\centerline{${}^c$ Department of Mathematics, Swansea University }
\centerline{Singleton Park, Swansea SA2 8PP, UK}

\smallskip
\centerline{April 17, 2018}

\bigskip\par
\begin{abstract}
As a class of L\'evy type Markov generators, nonlocal Waldenfels operators appear naturally in the context of investigating stochastic dynamics under L\'evy fluctuations and constructing Markov processes with boundary conditions (in particular the construction with jumps). This work is devoted to prove the weak and strong maximum principles for `parabolic' equations with nonlocal Waldenfels operators. Applications in stochastic differential equations with $\alpha$-stable L\'evy processes are presented to illustrate the maximum principles.
\end{abstract}

{\small \medskip\par\noindent {\bf Mathematics Subject
Classification (2010)}: 35B50; 35R09; 47G20; 60J75.

\medskip\par\noindent
{\bf Keywords and Phrases}: Nonlocal operators, weak and strong maximum principles, integro-partial differential equations,
Waldenfels operators, Fokker-Planck equations, stochastic
differential equations with $\alpha$-stable L\'evy processes.}

\section{Introduction}

The usual maximum principle concerns with second-order differential operators
of elliptic or parabolic type. It is a basic property of
solutions to boundary value problems for the associated elliptic or
parabolic partial differential equations (PDEs)  in a bounded domain. See
\cite{ProtterWeinberger,Renardy}) for a general study of maximum principles.
Classically, the maximum principle states that the maximum of the
  solution of a second-order elliptic or
parabolic equation in a domain is to be found on the boundary of
that domain. In particular, the strong maximum principle says that
if the solution achieves its maximum in the interior of the domain,
the solution must be   a constant, while the weak maximum
principle indicates that the maximum   is to be found
on the boundary but may re-occur in the interior as well. Let us
also mention \cite{Kuwae} where both weak and strong maximum principle
for symmetric Markov generators are discussed via (local) Dirichlet forms.
Moreover,  a maximum principle   for nonlocal operators generated
by nonnegative kernels defined on topological groups acting continuously
on a Hausdorff space was considered by Coville   \cite{Cov}.
The strong maximum principle for semicontinuous viscosity solution of
fully nonlinear second-order parabolic integro-differential equations was studied in \cite{Cio}.

A fairly large class of Markov processes on
$\mathbb{R}^d$ are governed analytically by their infinitesimal
generators, called L\'evy type generators or pseudo-differential
operators associated with negative definite symbols (cf. e.g.
\cite{Jacob}), either via martingale problem (cf. e.g.
\cite{Komatsu73,Komatsu00,Str,Stroock,Kolokoltsov}) or via Dirichlet
form (cf. e.g. \cite{Komatsu0,Komatsu,FOT,Jacob}). From
\cite{Jacob,Cou65}, these operators are usually integro-differential
operators or nonlocal operators, consisting of a combination of second-order
elliptic differential operators and integral operators of L\'evy
type. The nonlocal operator here corresponds to the jump component of a Markov process; in fact, it is an integral with respect to a jump measure.

The well-known Hille-Yosida theorem and the semigroup
approach, which can be found in e.g. \cite{kallenberg}, provide an intrinsic link between Markov processes and partial
differential equations, in particular second-order elliptic
differential operators,  as in the pioneering work of Feller in early 1950s. The monograph \cite{KTaira} (also
references therein) explores the functional analytic approach to
constructing Markov processes in a prescribed region of
$\mathbb{R}^d$, via the elliptic boundary value problems for the
associated L\'evy-type generators.

Due to the nature of  pseudo-differential operators (involving integral operators), the
L\'evy-type generators are nonlocal operators.
This kind of integro-differential operators was initiated by Waldenfels
\cite{Waldenfels} in 1960s. It was elucidated in \cite{KTaira} that a Markov
process associated with such an operator as infinitesimal operator  could be interpreted with a physical
picture: A Markovian particle moves both by jumps and
continuously in a certain region of the state space $\mathbb{R}^d$.

The present paper is devoted to the weak and strong maximum principles for the following nonlocal  parabolic  Waldenfels operator $-\frac{\partial}{\partial t} + L  $:
\begin{equation*}
  \begin{split}
     &\Big(-\frac{\partial}{\partial t} + L\Big) u(x,t)\\
      :=&  -\frac{\partial u}{\partial t}(x,t) +  \sum^d_{j,k=1}a_{jk}(x,t)\frac{\partial^2 u}{\partial x_j\partial x_k}(x,t)+\sum^d_{j=1}b_{j}(x,t)\frac{\partial u}{\partial x_j}(x,t)+c(x,t)u(x,t) \\
       & +\int_{\R^d\setminus\{0\}}\Big[u(x+z,t)-u(x,t)-\sum^d_{j=1}z_j \frac{\partial u}{\partial x_j}(x,t)\mathbf{1}_{\{|z|<1\}}\Big]\nu(t,x,dz),
  \end{split}
\end{equation*}
where the kernel $\{\nu(t,x,\cdot)\mid(x,t)\in\R^d\times[0,\infty)\}$   behaves as the jump measure for the associated Markov process.   The operator $L$ is called an elliptic  Waldenfels operator. Note that Waldenfels operators $L$ and
  $-\frac{\partial}{\partial t} + L  $  appear in the generator and  in the
Fokker-Planck equation, respectively, for  a stochastic differential equation  with  L\'evy motions \cite{Duan01, App, Dua15, Shlesinger95}.
We would like to point out
that Waldenfels operators also appear in nonlocal
  conservation laws \cite{TWu06}.   Certain properties for diffusion generators perturbed by the nonlocal
   Laplacian operator have also been studied   recently \cite{ARWu,ARWu1}.

We will prove the  new weak and strong maximum principles for the  nonlocal parabolic operator
 $-\frac{\partial}{\partial t} + L  $,   and they do not require any ``nondegeneracy'' conditions.
In order to cover the general case with either bounded or unbounded support of the jump measure $\nu$, we will introduce two open sets $D$ and $E$ (with $D \subset E$), where $D$ is the set where the maximum is achieved, and the stochastic process (``Markovian particle'') cannot jump from $D$ to the complement of $E$.

As a preparation for proving these maximum principles, we will     prove the maximum principles for nonlocal elliptic Waldenfels operator $L$. These  maximum principles   are important for the
 construction of Markov processes. In
\cite[Appendix C]{KTaira}, weak and strong maximum principles   for such elliptic Waldenfels
operators were proven,   but under stringent conditions, that is,  the jump measure  has to have bounded support.
The results in \cite{Cio} includes a strong maximum principle for viscosity solutions of certain \emph{nonlinear} nonlocal partial differential equations   under  a ``nondegeneracy'' condition.

The rest of this paper is organised as follows. In Section \ref{mpe},
we will present our results on maximum principles for elliptic Waldenfels operators. As a corollary, we also obtain the Hopf's Lemma about the sign of the gradient on the boundary.
Section \ref{mpp} is devoted to prove the maximum principles for parabolic Waldenfels operators.
Some consequences and examples are presented in Section \ref{exp}. Finally in Section \ref{appendix}, we present the proofs of some technical lemmas for the sake of completeness.

\section{Maximum principles for elliptic Waldenfels operators}\label{mpe}

In this section, we consider the weak and strong maximum principles for the elliptic Waldenfels operator $L$  (decomposed into local and nonlocal components)
\begin{equation}\label{ewo}
  L := A+K,
\end{equation}
where $A$ and $K$ are defined   as
\begin{equation*}
  \begin{split}
     Au(x) :=& \ \sum^d_{j,k=1}a_{jk}(x)\frac{\partial^2 u}{\partial x_j\partial x_k}(x)+\sum^d_{j=1}b_{j}(x)\frac{\partial u}{\partial x_j}(x)+c(x)u(x), \\
     Ku(x) :=& \int_{\R^d\setminus\{0\}}\Big[u(x+z)-u(x)-\sum^d_{j=1}z_j \frac{\partial u}{\partial x_j}(x)\mathbf{1}_{\{|z|<1\}}\Big]\nu(x,dz).
  \end{split}
\end{equation*}
Note that the coefficients are taken to be independent of time $t$. Note that the operator $K$ is actually the nonlocal Laplacian operator $-(-\Delta )^{\frac{\alpha}2}$, when the jump measure $\nu$ is the $\alpha$-stable type; see \cite[Ch. 7]{Dua15}.

The  elliptic Waldenfels operator $L$
plays an important role \cite{KTaira} in the theory of
Markov processes constructed in a given domain of $\mathbb{R}^d$. In that context, the second-order differential operator describes
the diffusion part of the associated Markov process and the integral
operator of L\'evy type corresponds to the jump behavior of the
Markov process. Finally, there is an assumption in that context which indicates that a
Markovian particle cannot move by jumps from any interior point of certain domain to the
outside of closure of the domain.  For further remarks and discussions, we refer
e.g. to Bony, Courr\`ege and Priouret \cite{BCP68} and Taira \cite{KTaira}.

\vspace{3mm}

To cover  more general situations,   we introduce two open sets $D$ and $E$ in  $\R^d$, with    $D\subset E$ and $E$   not necessarily bounded.  As usual, we denote the boundary of $D$ by $\partial D$, its closure by $\overline{D}:=D\cup\partial D$ and its complement by $D^c:=\R^d\setminus D$.

We make following assumptions:
\begin{enumerate}
  \item Continuity condition: $a_{jk},b_j,c\in C(\overline{E})$ $(j,k=1,...,d).$
  \item Symmetry condition: $a_{jk}=a_{kj}$ $(j,k=1,...,d)$. \\
  Uniform ellipticity condition:
      there exists a constant $\gamma>0$ such that
      \begin{equation}\label{pd}
        \sum^d_{j,k=1}a_{jk}(x)\xi_j\xi_k\ge\gamma|\xi|^2,
      \end{equation}
      for all $x\in D$, $\xi\in\R^d$.
  \item L\'evy measures: The kernel $\{\nu(x,\cdot)\mid x\in\R^d\}$ is a family of L\'evy measures, namely, each $\nu(x,\cdot)$ is a Borel measure on $\mathbb{R}^d\setminus\{0\}$ such that
      \begin{equation}\label{LM}
        \sup_{x\in\R^d} \int_{\mathbb{R}^d\setminus\{0\}}(1\land|z|^2)\nu(x,dz)<\infty,
      \end{equation}
      and moreover, for fixed $U\in\mathcal{B}(\mathbb{R}^d\setminus\{0\})$, the mapping $\R^d\ni x\to\nu(x,U)\in[0,\infty)$ is Borel measurable. Here we further assume that for each $x\in D$ the measure $\nu(x,\cdot)$ is supported in $\overline E-x:=\{y-x\mid y\in\overline E\}=\{z\mid x+z\in\overline E\}$, i.e.,
      \begin{equation}\label{supportNu2}
        \text{\upshape supp}\,\nu(x,\cdot)\subset\overline E-x,\quad \forall x\in D.
      \end{equation}
\end{enumerate}

\begin{Remark}\label{special}
  The support condition (\ref{supportNu2})   means in probability sense that a Markovian particle cannot move by jumps from a point $x\in D$ to the outside of $\overline{E}$. The motivation for this condition is that the maximizer point will propagate between connected components of the set in which the subsolution achieves maximum. The details will be discussed again in Remark \ref{tran} below. When   the set $E$ is the whole space $\R^d$, $E-x$ is still the whole space, and then there are actually no extra restrictions on the support of each measure $\nu(x,\cdot)$. In the case that $E=D$, the support condition is $\text{\upshape supp}\,\nu(x,\cdot)\subset \overline D-x$, and this is related to the assumption   in \cite{KTaira} that a Markovian particle cannot move by jumps from a point $x\in D$ to the outside of $\overline{D}$.
\end{Remark}

For convenience,  the notation $\mathbf{a}=(a_{jk})_{j,k=1,...,d}$   means $\mathbf{a}$ is a matrix with $(j,k)$-th entry $a_{jk}$, and $b=(b_1,...,b_d)^T$ is regarded as a row vector. We also recall the gradient operator (for space variable) $\nabla_x=\big(\frac{\partial}{\partial x_1},...,\frac{\partial}{\partial x_d}\big)^T$ and the Hessian operator $\nabla^2_x=\nabla_x\otimes\nabla_x=\big(\frac{\partial^2}{\partial x_j\partial x_k}\big)_{j,k=1,...,d}$, where $\otimes$ means the tensor product. The variables or subscripts will be omitted when there is no ambiguity. Then we can rewrite the operator $L$ as
\begin{equation}\label{ewo1}
  \begin{split}
     Lu =&\ Au+Ku \\
       =& \ \text{tr}[\mathbf{a}^T(\nabla^2 u)]+b^T\nabla u+cu \\
       &\ +\int_{\R^d\setminus\{0\}}\big[u(\cdot+z)-u-z^T\nabla u\cdot\mathbf{1}_{\{|z|<1\}}\big]\nu(\cdot,dz),
  \end{split}
\end{equation}
where ``tr'' denote the trace of a matrix.  Both $x^Ty$ and $x\cdot y$, for two vectors $x,y\in\R^d$,   denote the scalar product. Moreover, we denote the positive and negative part of function $u$ by $u^+:=u\lor0$ and $u^-:=-(u\land0)=(-u)\lor0$, respectively. Then $u=u^+-u^-$ and $|u|=u^++u^-$.

In this section, $L$ is the elliptic Waldenfels operator as defined in (\ref{ewo}).

\subsection{Weak maximum principle for elliptic case}
We now prove the weak maximum principle.

\begin{Theorem}[Weak maximum principle for elliptic Waldenfels operators]\label{WMPe}
 Let $D$ be an open and bounded set but not necessarily connected,  and $E$ be an open set satisfying $D\subset E$.  Assume that $u\in C^2(D)\cap C(\overline{E})$, $Lu\ge0$ in $D$, and $\text{\upshape supp}\,\nu(x,\cdot)\subset\overline E-x$ for each $x\in D$.
  \begin{enumerate}
    \item If $c\equiv0$ in $D$, then $$\sup_{\overline{E}}u=\sup_{\overline{E}\setminus D}u.$$
    \item If $c\le0$ in $D$, then
        \begin{equation*}
          \sup_{\overline{E}}u\le\sup_{\overline{E}\setminus D}u^+.
        \end{equation*}
  \end{enumerate}
Here the supremum may be infinity.
\end{Theorem}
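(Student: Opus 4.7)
The plan is to first establish the theorem under the strict inequality $Lu>0$ on $D$, and then to reduce the case $Lu\ge 0$ to this strict version by an additive perturbation. The two assertions can be handled in parallel: put $M:=\sup_{\overline{E}\setminus D} u$ in case 1 and $M:=\sup_{\overline{E}\setminus D} u^+$ in case 2. If $M=+\infty$ there is nothing to prove, so assume $M<\infty$ and aim for $\sup_{\overline{E}} u\le M$.

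For the strict case, suppose for contradiction there is $x^*\in D$ with $u(x^*)>M$ (and hence $u(x^*)>0$ in case 2, since $M\ge 0$). Compactness of $\overline{D}$ and continuity of $u$ give a point $x_0\in\overline{D}$ at which $u|_{\overline{D}}$ attains its maximum; the inequality $u(x_0)\ge u(x^*)>M\ge u|_{\partial D}$ forces $x_0\in D$. Decomposing $\overline{E}=D\cup(\overline{E}\setminus D)$ yields $\sup_{\overline{E}} u=u(x_0)$, so in particular $u(x_0+z)\le u(x_0)$ for every $z\in\mathrm{supp}\,\nu(x_0,\cdot)\subset\overline{E}-x_0$. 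The interior-extremum relations $\nabla u(x_0)=0$ and $\nabla^2 u(x_0)\le 0$, combined with symmetry and uniform ellipticity of $\mathbf{a}$, yield $\mathrm{tr}[\mathbf{a}(x_0)\nabla^2 u(x_0)]\le 0$; in both cases $c(x_0)u(x_0)\le 0$, so $Au(x_0)\le 0$. The drift correction in $Ku(x_0)$ vanishes since $\nabla u(x_0)=0$, and the remaining integrand $u(x_0+z)-u(x_0)$ is nonpositive on $\mathrm{supp}\,\nu(x_0,\cdot)$; hence $Ku(x_0)\le 0$. Thus $Lu(x_0)\le 0$, contradicting $Lu(x_0)>0$.

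To reduce the general case, set $v_\epsilon:=u+\epsilon\phi$ with $\phi(x):=e^{\lambda x_1}$ and $\lambda>0$ to be chosen. A direct computation gives
\[
L\phi(x)=e^{\lambda x_1}\Big[\,a_{11}(x)\lambda^2+b_1(x)\lambda+c(x)+\int_{\R^d\setminus\{0\}}\bigl(e^{\lambda z_1}-1-\lambda z_1\mathbf{1}_{\{|z|<1\}}\bigr)\nu(x,dz)\,\Big].
\]
On the compact set $\overline{D}$ the coefficients are bounded by continuity; ellipticity supplies $a_{11}\ge\gamma$; the integrand over $\{|z|<1\}$ is nonnegative (since $e^y\ge 1+y$) and integrable via the Lévy bound on $\int(1\wedge|z|^2)\nu$; the tail over $\{|z|\ge 1\}$ is controlled by the uniform finiteness of $\nu(x,\{|z|\ge 1\})$ together with the support condition $\mathrm{supp}\,\nu(x,\cdot)\subset\overline{E}-x$, which restricts the range of $z_1$ that the kernel actually sees. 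Hence $L\phi>0$ on $D$ for $\lambda$ large enough, and so $Lv_\epsilon\ge\epsilon L\phi>0$; applying the strict case to $v_\epsilon$ and letting $\epsilon\downarrow 0$ transfers the bound back to $u$.

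The main obstacle I anticipate is precisely the verification $L\phi>0$: one must ensure that the nonlocal contribution $K\phi$ cannot overwhelm the positive quadratic-in-$\lambda$ local term, and this is where the support restriction $\mathrm{supp}\,\nu(x,\cdot)\subset\overline{E}-x$ plays its essential role, by preventing the kernel from probing the exponential test function in directions where it grows uncontrollably. A subsidiary but important point is to keep $\phi$ sufficiently well-behaved on $\overline{E}\setminus D$ so that $\sup_{\overline{E}\setminus D}(u+\epsilon\phi)$ returns to $\sup_{\overline{E}\setminus D} u$ in the limit $\epsilon\downarrow 0$.
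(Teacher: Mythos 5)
Your proposal is correct and its core is the same as the paper's: at an interior maximizer of $u$ over $\overline E$ one gets $Au\le 0$ from $\nabla u=0$, $\nabla^2u\le 0$ and ellipticity, and $Ku\le 0$ from the support condition, contradicting a strict inequality $Lu>0$; the general case is then reduced to the strict one by adding $\epsilon$ times an exponential barrier in the $x_1$-direction (your $e^{\lambda x_1}$ versus the paper's $e^{-\beta x_1}$ — an immaterial reflection, since only $a_{11}\ge\gamma$ is used). There are two points worth noting. First, you handle Assertion 2 directly, observing that an interior maximizer with $u(x_0)>M\ge 0$ forces $c(x_0)u(x_0)\le 0$; the paper instead reduces Assertion 2 to Assertion 1 by passing to the open set $D_+=\{x\in D\mid u(x)>0\}$ and the operator $L-c$, which has no zeroth-order term. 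Your route is shorter and equally valid (and, unlike the paper's, extends verbatim to the parabolic case, where the paper notes the $D_+$ trick fails). Second, the ``main obstacle'' you anticipate is not where the difficulty lies: to bound $K\phi$ from below you do not need the support condition at all, since on $\{|z|\ge 1\}$ one has $e^{\lambda z_1}-1\ge -1$ and the uniform bound \eqref{LM} controls $\nu(x,\{|z|\ge1\})$, while large positive $z_1$ only makes $K\phi$ more positive; this is exactly how the paper bounds the tail of $K(e^{-\beta x_1})$. The support condition is instead essential in the strict-case step, where it guarantees $u(x_0+z)\le u(x_0)$ on $\operatorname{supp}\nu(x_0,\cdot)$ because $x_0$ maximizes $u$ only over $\overline E$ and not over $\R^d$ — a point you do use correctly. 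The ``subsidiary point'' you flag, that $\sup_{\overline E\setminus D}\phi$ may be infinite when $E$ is unbounded so that letting $\epsilon\downarrow 0$ does not obviously recover $\sup_{\overline E\setminus D}u$, is a genuine subtlety, but the paper's own proof passes over it in exactly the same way, so your argument is no weaker than the published one.
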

\begin{proof}
Assertion 1.   We first consider  the case with the strict inequality
\begin{equation}\label{Lu1}
  Lu>0\quad\text{in }D.
\end{equation}
Suppose that on the contrary $\sup_{\overline{E}}u>\sup_{\overline{E}\setminus D}u$. Then   there exists a point $x^0\in D$ with $u(x^0)=\sup_{\overline{E}}u$,
and
$$u(x^0)=\max_{\overline{D}}u.$$
Thus at the maximizer point $x^0$, we have
\begin{align}
  \nabla u(x^0) &= 0, \label{du} \\
  \nabla^2u(x^0) &\le 0, \label{d2u}
\end{align}
where the last inequality means that the symmetric matrix  $\nabla^2u(x^0)$ is nonpositive definite. In particular, $\frac{\partial^2 u}{\partial x_j^2}(x^0)\le0, j=1,...,d$.
Since the matrix $\mathbf{a}=(a_{jk})$ is symmetric and positive definite at $x^0$, there exists an orthogonal matrix $P$ such that
\begin{equation*}
  P[\mathbf{a}(x^0)]P^T=\text{diag}(\lambda_1,...,\lambda_d),
\end{equation*}
where ``diag'' means the diagonal matrix with diagonal entries $\lambda_j>0,j=1,...,d$, which are eigenvalues of $\mathbf{a}(x^0)$. Then by changing variables $y-x^0=P(x-x^0)$, we have
\begin{equation*}
  \begin{split}
     \nabla_x u &= P^T(\nabla_y u), \\
     \nabla^2_x u &= P^T(\nabla^2_y u)P.
  \end{split}
\end{equation*}
In light of (\ref{d2u}), we find that at point $x^0$,
\begin{equation}\label{a}
  \begin{split}
     &\ \text{tr}[\mathbf{a}^T(\nabla^2_x u)] = \text{tr}[\mathbf{a}^TP^T(\nabla^2_y u)P]=\text{tr}[P\mathbf{a}^TP^T(\nabla^2_y u)] \\
       = &\ \text{tr}[(P\mathbf{a}P^T)^T(\nabla^2_y u)]= \sum_{j}\lambda_j\frac{\partial^2u}{\partial y_j^2}\le0.
  \end{split}
\end{equation}
Thus, combining (\ref{du}), (\ref{a}) and the assumption $c\equiv0$, together with the fact that $u$ attains a maximum at $x^0$, we obtain that at $x^0$,
\begin{equation*}
  \begin{split}
     Au &= \text{tr}[\mathbf{a}^T(\nabla^2 u)]+b^T\nabla u+cu\le0, \\
     Ku(x^0) &= \int_{\R^d\setminus\{0\}}\big[u(x^0+z)-u(x^0)-z^T\nabla u(x^0)\cdot\mathbf{1}_{\{|z|<1\}}\big]\nu(x^0,dz) \\
       &= \int_{\overline E-x}\big[u(x^0+z)-u(x^0)\big]\nu(x^0,dz) \\
       &\le 0.
  \end{split}
\end{equation*}
Hence
\begin{equation}\label{Lu2}
  Lu=Au+Ku\le0 \quad\text{at }x^0.
\end{equation}
Therefore, we get a contradiction in light of (\ref{Lu1}) and (\ref{Lu2}), which leads to $\sup_{\overline{E}}u=\sup_{\overline{E}\setminus D}u$.

For the general case that $Lu\ge0$,  we introduce a function
\begin{equation}\label{ue_e}
  u^\epsilon(x):=u(x)+\epsilon e^{-\beta x_1},\quad x\in\overline{E},
\end{equation}
where $\beta>0$ will be selected below and $\epsilon$ is a positive  parameter. Note that $a_{11}\ge\gamma>0$,    by substituting $z=e_1=(1,0,...,0)$ into condition (\ref{pd}). Then by Taylor expansion and the moment condition (\ref{LM}) of kernel $\nu$, we have
\begin{equation*}
  \begin{split}
    Lu^\epsilon =&\ Lu+\epsilon L(e^{-\beta x_1}) \\
    \ge&\ \epsilon e^{-\beta x_1}\Big[\beta^2a^{11}-\beta b^1+\int_{|z|\ge1}\big(e^{-\beta z_1}-1\big)\nu(x,dz) \\
    &\qquad\quad\ +\int_{0<|z|<1}\big(e^{-\beta z_1}-1+\beta z_1\big)\nu(x,dz)\Big] \\
    \ge&\ \epsilon e^{-\beta x_1}\Big[\beta^2a^{11}-\beta b^1-\int_{|z|\ge1}\nu(x,dz)+\frac{1}{2} \beta^2\int_{0<|z|<1}z_1^2e^{-\beta\theta z_1}\nu(x,dz)\Big] \\
    \ge&\ \epsilon e^{-\beta x_1}\Big[\beta^2a^{11}-\beta b^1-\int_{|z|\ge1}\nu(x,dz)+\frac{1}{2} \beta^2e^{-\beta\theta}\int_{0<|z|<1}z_1^2\nu(x,dz)\Big] \\
    >&\ 0,
  \end{split}
\end{equation*}
provided $\beta>0$ is large enough, where $\theta$  is a constant with $0<\theta<1$.

Then by the previous conclusion, $\sup_{\overline{E}}u^\epsilon=\sup_{\overline{E}\setminus D}u^\epsilon$. Let $\epsilon\to0$ to find $\sup_{\overline{E}}u=\sup_{\overline{E}\setminus D}u$ by the continuity. This proves Assertion 1.

Assertion 2. If $u\le0$ everywhere in $D$, the second assertion is trivially true. Hence we set $D_+:=\{x\in D\mid u(x)>0\}\ne\emptyset$. Then
\begin{equation*}
  (L-c)u\ge-cu\ge0 \quad\text{in }D_+.
\end{equation*}
The new operator $L-c$ has no zeroth-order term and consequently Assertion 1 implies that
\begin{equation*}
  \sup_{\overline{E}}u = \sup_{\overline{E}\setminus D_+}u=\big(\sup_{\overline{E}\setminus D}u\big)\lor\big(\sup_{D\setminus D_+}u\big)=\big(\sup_{\overline{E}\setminus D}u\big)\lor0=\sup_{\overline{E}\setminus D}u^+.
\end{equation*}
This completes the proof.
\end{proof}

\begin{Remark}\label{self}
  From the proof of Assertion 2 in Theorem \ref{WMPe},  we have  the following conclusions.
  \begin{enumerate}
    \item In Assertion 1, if   $Lu>0$ in $D$, then $u$ can either achieve its (finite) maximum only on $\overline{E}\setminus D$ or be unbounded on $\overline E$.
    \item In Assertion 2, essentially the following equality holds according to the proof,
        $$\sup_{\overline{E}}u^+=\sup_{\overline{E}\setminus D}u^+,$$
        even though the   Assertion 1 in Theorem \ref{WMPe} cannot be applied directly to $u^+$ as it is not in $C^2(D)$. Especially if $u$ can take positive values in $D$, or equivalently, $D_+\neq\emptyset$, then we have
        $$\sup_{\overline{E}}u=\sup_{\overline{E}\setminus D}u^+.$$
  \end{enumerate}
\end{Remark}

\begin{Remark}\label{WMPde}
  The proof of Theorem \ref{WMPe} still works if the matrix $\mathbf{a}=(a_{jk})$ is only positive semidefinite. Indeed, since the eigenvalues of $\mathbf{a}(x^0)$ are nonnegative ($\lambda_j\ge0, j=1,...,d$), the inequality (\ref{a}) still holds.
\end{Remark}

\begin{Remark}
   As in Remark \ref{special}, there are two special cases for Theorem \ref{WMPe}, that is, $E=\R^d$ or $E=D$. Using the latter as an example, namely, $u\in C^2(D)\cap C(\overline{D})$, $Lu\ge0$ in $D$, and $\text{\upshape supp}\,\nu(x,\cdot)\subset\overline D-x$ for each $x\in D$, where $D$ is open and bounded but not necessarily connected, then the following conclusions holds:
  \begin{enumerate}
    \item If $c\equiv0$ in $D$, then $$\max_{\overline{D}}u=\max_{\partial{D}}u.$$
    \item If $c\le0$ in $D$, then
        \begin{equation*}
          \max_{\overline{D}}u\le\max_{\partial{D}}u^+.
        \end{equation*}
  \end{enumerate}
\end{Remark}

\begin{Corollary}\label{L<=}
  Let $D$ be an open and bounded set but not necessarily connected,  and $E$ be an open set satisfying $D\subset E$.   Assume that $u\in C^2(D)\cap C(\overline{E})$, and $\text{\upshape supp}\,\nu(x,\cdot)\subset\overline E-x$ for each $x\in D$.
  \begin{enumerate}
    \item If $c\equiv0$ and $Lu\le0$ both hold in $D$, then
        $$\inf_{\overline{E}}u=\inf_{\overline{E}\setminus D}u.$$
    \item If $c\le0$ and $Lu\le0$ both hold in $D$, then $$\inf_{\overline{E}}u\ge-\sup_{\overline{E}\setminus D}u^-.$$
    \item If $c\le0$ and $Lu=0$ both hold in $D$, then $$\sup_{\overline{E}}|u|=\sup_{\overline{E}\setminus D}|u|.$$
  \end{enumerate}
  In all the three expressions, the supremum and infimum may be infinity.
\end{Corollary}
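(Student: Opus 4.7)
The plan is to deduce all three assertions from Theorem \ref{WMPe} by applying it to $-u$ and exploiting the linearity of $L$. Note that $L(-u) = -Lu$, and if $c \le 0$ then the same coefficient $c$ appears in the operator applied to $-u$, so the hypotheses on $c$ transfer directly. Also, $(-u)^+ = u^-$ and $\sup(-u) = -\inf u$.

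For Assertion 1, under $c \equiv 0$ and $Lu \le 0$, the function $v := -u$ satisfies $Lv \ge 0$ in $D$ with $c \equiv 0$, and $v \in C^2(D) \cap C(\overline E)$. Part 1 of Theorem \ref{WMPe} gives $\sup_{\overline E} v = \sup_{\overline E \setminus D} v$, which translates to $\inf_{\overline E} u = \inf_{\overline E \setminus D} u$.

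For Assertion 2, under $c \le 0$ and $Lu \le 0$, the function $v = -u$ satisfies $Lv \ge 0$ in $D$ with $c \le 0$. Part 2 of Theorem \ref{WMPe} yields
\begin{equation*}
  \sup_{\overline E}(-u) \le \sup_{\overline E \setminus D}(-u)^+ = \sup_{\overline E \setminus D} u^-,
\end{equation*}
which rearranges to $\inf_{\overline E} u \ge -\sup_{\overline E \setminus D} u^-$.

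For Assertion 3, since $Lu = 0$ implies both $Lu \ge 0$ and $Lu \le 0$, apply Part 2 of Theorem \ref{WMPe} to $u$ and Assertion 2 just established to $u$, giving
\begin{equation*}
  \sup_{\overline E} u \le \sup_{\overline E \setminus D} u^+ \le \sup_{\overline E \setminus D}|u|, \qquad
  -\inf_{\overline E} u \le \sup_{\overline E \setminus D} u^- \le \sup_{\overline E \setminus D}|u|.
\end{equation*}
Since $\sup_{\overline E}|u| = \max\{\sup_{\overline E} u,\, -\inf_{\overline E} u\}$, this gives $\sup_{\overline E}|u| \le \sup_{\overline E \setminus D}|u|$, and the reverse inequality is trivial because $\overline E \setminus D \subset \overline E$.

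There is no real obstacle here: the only points to verify are that $-u$ satisfies the same regularity and support hypotheses (immediate) and that the kernel $K$ and hence $L$ is linear so $L(-u) = -Lu$ (clear from the definition in \eqref{ewo1}). The corollary is essentially a formal restatement of Theorem \ref{WMPe} under the symmetry $u \mapsto -u$.
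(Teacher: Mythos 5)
Your proof is correct and follows essentially the same route as the paper: all three assertions are obtained by applying Theorem \ref{WMPe} to $-u$. The only (minor) difference is in Assertion 3, where the paper invokes the strengthened equality $\sup_{\overline{E}}u^{\pm}=\sup_{\overline{E}\setminus D}u^{\pm}$ from Remark \ref{self}, whereas you use only the one-sided inequalities from the theorem together with the trivial reverse inequality $\sup_{\overline{E}\setminus D}|u|\le\sup_{\overline{E}}|u|$ --- a slight simplification that avoids relying on the remark.
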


\begin{proof}
  1. Apply directly the first assertion of Theorem \ref{WMPe} to $-u$.

  2. Apply the second assertion of Theorem \ref{WMPe} to $-u$.

  3. Applying Statement 2 in Remark \ref{self} to $-u$, we have
  $$\sup_{\overline{E}}u^-=\sup_{\overline{E}\setminus D}u^-.$$
Then it follows that
  \begin{equation*}
    \sup_{\overline{E}}|u|=\big(\sup_{\overline{E}}u^+\big) \lor\big(\sup_{\overline{E}}u^-\big)=\big(\sup_{\overline{E}\setminus D}u^+\big)\lor\big(\sup_{\overline{E}\setminus D}u^-\big)=\sup_{\overline{E}\setminus D}|u|.
  \end{equation*}
 This completes the proof.
\end{proof}

Going one step further, we suppose $E$ is bounded and then apply Corollary \ref{L<=} to $u-v$, yielding the following corollary which is often used in applications.
\begin{Corollary}
  Let $D$ be an open and bounded set but not necessarily connected,  and $E$ be an open set satisfying $D\subset E$.   Assume that $u, v\in C^2(D)\cap C(\overline{E})$, $c\le0$ in $D$, and $\text{\upshape supp}\,\nu(x,\cdot)\subset\overline E-x$ for each $x\in D$.
  \begin{enumerate}
    \item (Comparison Principle) If $Lu\le Lv$ in $D$ and $u\ge v$ on $\overline{E}\setminus D$, then $u\ge v$ in $\overline{E}$.
    \item (Uniqueness) If $Lu=Lv$ in $D$ and $u=v$ on $\overline{E}\setminus D$, then $u=v$ in $\overline{E}$.
  \end{enumerate}
\end{Corollary}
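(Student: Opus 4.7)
The plan is to reduce both assertions to Corollary \ref{L<=} by working with the difference $w := u - v$, which inherits the regularity $w \in C^2(D) \cap C(\overline{E})$ from $u$ and $v$, and for which the support hypothesis on $\nu$ carries over unchanged because that condition does not involve the solution.

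First I would verify the setup for the comparison principle. By linearity of $L$ we have $Lw = Lu - Lv \le 0$ in $D$, and the boundary hypothesis $u \ge v$ on $\overline{E} \setminus D$ translates into $w \ge 0$ on $\overline{E} \setminus D$, so $w^- \equiv 0$ there. Since $c \le 0$ in $D$, Assertion 2 of Corollary \ref{L<=} applies to $w$ and yields
$$\inf_{\overline{E}} w \ge -\sup_{\overline{E}\setminus D} w^- = 0,$$
which means $w \ge 0$ on $\overline{E}$, i.e.\ $u \ge v$ there, establishing the comparison principle.

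For the uniqueness statement, the hypotheses give $Lw = 0$ in $D$ and $w \equiv 0$ on $\overline{E}\setminus D$. One clean route is to apply Assertion 3 of Corollary \ref{L<=} to $w$, obtaining
$$\sup_{\overline{E}} |w| = \sup_{\overline{E}\setminus D} |w| = 0,$$
and hence $w \equiv 0$ on $\overline{E}$. An even shorter route is to apply the comparison principle just proved twice, once to the pair $(u,v)$ and once with the roles of $u$ and $v$ swapped, which gives both $u \ge v$ and $v \ge u$ on $\overline{E}$.

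The main ``obstacle'' here is essentially bookkeeping rather than any genuine analytic difficulty: the real work is already in Corollary \ref{L<=}, and one only has to check that every hypothesis transfers to $w$. Regularity is immediate, the nonlocal support condition on $\nu$ is independent of the function, and the sign condition $c \le 0$ is preserved because the zeroth-order coefficient of $L$ does not depend on the solution. No boundedness of $E$ is needed, since Corollary \ref{L<=} explicitly allows the suprema and infima to be infinite.
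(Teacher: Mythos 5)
Your proposal is correct and follows essentially the same route as the paper: the paper likewise applies the last two assertions of Corollary \ref{L<=} to the difference $u-v$, using Assertion 2 for the comparison principle and Assertion 3 for uniqueness. The only difference is that you spell out the bookkeeping (and note that your alternative double application of the comparison principle also works), which the paper leaves implicit.
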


\begin{proof}
  The two results immediately follow by using the last two assertions of Corollary \ref{L<=} for $u-v$.
\end{proof}

\subsection{Strong maximum principle for elliptic case}

This section is devoted to the strong maximum principle for the elliptic Waldenfels operator $L$.

\begin{Theorem}[Strong maximum principle for elliptic Waldenfels operator]\label{smpe}
  Let $D$ be an open and connected set   but not necessarily bounded,  and $E$ be an open set satisfying $D\subset E$.  Assume that $u\in C^2(D)\cap C(\overline{E})$, $Lu\ge0$ in $D$, and $\text{\upshape supp}\,\nu(x,\cdot)\subset\overline E-x$ for each $x\in D$. Moreover, assume that the mapping $x\to\nu(x,\cdot)$ is continuous in $D$.  If one of the following conditions holds:
  \begin{enumerate}
    \item $c\equiv0$ in $D$ and $u$ achieves a (finite) maximum over $\overline E$ at an interior point in $D$;
    \item $c\le0$ in $D$ and $u$ achieves a (finite) nonnegative maximum over $\overline E$ at an interior point in $D$;
    \item $u$ achieves a zero maximum over $\overline E$ at an interior point in $D$,
  \end{enumerate}
  then $u$ is constant on $\overline D$.
\end{Theorem}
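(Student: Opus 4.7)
The plan is to reduce all three cases to Case~1 ($c\equiv 0$, finite maximum $M:=u(x^0)$), and then prove that the maximizer set $F:=\{x\in D:u(x)=M\}$ is both open and closed in $D$. Connectedness of $D$ together with continuity of $u$ on $\overline E$ will then give $u\equiv M$ on $\overline D$. The reductions mirror those of Theorem~\ref{WMPe}: in Case~2 with $M>0$ I would pass to $D_+:=\{u>0\}$ and the zeroth-order-free operator $\hat Lu:=Lu-cu$, which satisfies $\hat Lu\ge -cu\ge 0$ on $D_+$; Case~1 applied to the component $C$ of $D_+$ containing $x^0$ gives $u\equiv M$ on $C$, and $C$ is then closed in $D$ because a relative-boundary point $y\in\partial C\cap D$ would simultaneously satisfy $u(y)\le 0$ (from $y\notin D_+$) and $u(y)=M>0$ (by continuity along $C$). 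Hence $C=D$. Case~3 is handled in parallel since $cu=0$ at every maximizer.

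For Case~1, $F$ is nonempty and closed in $D$ by continuity; openness rests on two ingredients. The first is \emph{nonlocal propagation}: at any $x\in F$ one has $\nabla u(x)=0$ and $\nabla^2 u(x)\le 0$ (negative semidefinite), so diagonalizing $\mathbf a(x)$ as in the proof of Theorem~\ref{WMPe} yields $Au(x)\le 0$; the support condition $\text{\upshape supp}\,\nu(x,\cdot)\subset\overline E-x$ combined with $u\le M$ on $\overline E$ makes the integrand of $Ku(x)$ nonpositive, so $Ku(x)\le 0$. With $Lu(x)\ge 0$ this forces $Au(x)=Ku(x)=0$, and the vanishing of $Ku(x)$ with a nonpositive integrand gives $u(x+z)=M$ for $\nu(x,\cdot)$-a.e.\ $z$; continuity of $u$ and of $x\mapsto\nu(x,\cdot)$ then extends $u\equiv M$ to the topological support of $\nu(x,\cdot)$ translated by $x$. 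The second ingredient is a Hopf-type \emph{local propagation}. If $F$ is not open, I would inflate a ball around a point of $D\setminus F$ sufficiently close to $F$ to obtain $B=B_R(y_0)$ with $\overline B\subset D$, $B\cap F=\emptyset$, and a touching point $x^*\in\partial B\cap F$. On the barrier $h(y):=e^{-\alpha|y-y_0|^2}-e^{-\alpha R^2}$, uniform ellipticity~(\ref{pd}) gives
\[
Ah(y)\ge\bigl[4\alpha^2\gamma|y-y_0|^2-C_1\alpha\bigr]e^{-\alpha|y-y_0|^2}
\]
on the annulus $\Omega:=B\setminus\overline{B_{R/2}(y_0)}$, while Taylor expansion plus the moment bound~(\ref{LM}) yields $|Kh(y)|\le C_2\alpha$ uniformly on $\Omega$; a joint scaling with $\alpha R^2$ of order one and $R$ small then forces $Lh>0$ throughout $\Omega$. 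Applying Theorem~\ref{WMPe} to $w:=u+\epsilon h-M$ on $\Omega$ (checking $w\le 0$ on $\overline E\setminus\Omega$ using $u\le M-\delta$ on the compact set $\partial B_{R/2}(y_0)\subset D\setminus F$ and $h\le 0$ off $B$) gives $w\le 0$ on $\overline E$ with equality at $x^*$. Hence the outward normal derivative of $w$ with respect to $B$ satisfies $\partial_n w(x^*)\ge 0$; but $\nabla u(x^*)=0$ (interior maximum of $u$) and $\partial_n h(x^*)=-2\alpha R e^{-\alpha R^2}<0$ give $\partial_n w(x^*)=\epsilon\,\partial_n h(x^*)<0$, a contradiction.

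The main obstacle will be the estimate $Lh>0$ on $\Omega$. Naively, for fixed $R$ and $\alpha\to\infty$, $Ah$ decays like $\alpha^2 e^{-\alpha R^2}$ while $|Kh|$ grows like $\alpha$, so the inequality degenerates; this is the nonlocal obstruction absent in the classical Hopf lemma. The remedy is the joint scaling $\alpha R^2\sim 1$ with $R$ small, keeping $e^{-\alpha R^2}$ of order one and making the dominant term of $Ah$ comparable to $\alpha^2 R^2\sim\alpha$ with a constant controlled via the uniform moment bound~(\ref{LM}) and the continuity hypothesis on $x\mapsto\nu(x,\cdot)$. Executing this balance carefully, together with the precise geometric construction of the touching ball and the Case~2 and Case~3 reductions, constitutes the technical heart of the proof.
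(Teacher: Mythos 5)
Your skeleton is the same as the paper's: reduce Cases 2 and 3 to the zeroth-order-free case, produce a touching ball $\overline B\subset D$ whose boundary meets the maximizer set only at $x^*$ while $u<M$ inside, run the Hopf barrier $h(y)=e^{-\alpha|y-y_0|^2}-e^{-\alpha R^2}$ through the weak maximum principle of Theorem \ref{WMPe}, and contradict $\nabla u(x^*)=0$ with $\partial_{\mathbf n}h(x^*)<0$. The open/closed bookkeeping and the Case 2 reduction via $D_+$ are fine (for Case 3 the clean device, used in the paper, is to replace $L$ by $L-c^+$, whose zeroth-order coefficient $c-c^+\le 0$; the observation that $cu=0$ at maximizers does not by itself give a differential inequality on an open set).

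The genuine gap is exactly where you place it: the claim $Lh>0$ on $\Omega$. The joint scaling $\alpha R^2\sim 1$ with $R$ small does not close it. Setting $c_0:=\alpha R^2$, your lower bound for the local part on the annulus is $Ah\ge e^{-c_0}\bigl[\gamma c_0\alpha-C_1\alpha\bigr]$, so the best you can extract is $\sup_{c_0>0}e^{-c_0}(\gamma c_0-C_1)=\gamma e^{-1-C_1/\gamma}$ times $\alpha$, while your bound $|Kh|\le C_2\alpha$ carries $C_2\sim\sup_x\int(1\wedge|z|^2)\nu(x,dz)$. Nothing in the hypotheses makes $\gamma e^{-1-C_1/\gamma}$ exceed $C_2$, so positivity of $Lh$ cannot be concluded; the theorem is supposed to hold with no relation between $\gamma$ and the L\'evy moments. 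The repair is not a scaling but a sharper lower bound on $Kh$ that retains the \emph{same} exponential weight as $Ah$. Writing $\sigma:=2z^T(y-y_0)+|z|^2$, one has the exact identity
\[
h(y+z)-h(y)-z^T\nabla h(y)=e^{-\alpha|y-y_0|^2}\bigl[(e^{-\alpha\sigma}-1+\alpha\sigma)-\alpha|z|^2\bigr],
\]
and $e^{-t}-1+t\ge0$ gives the pointwise bound $\ge-\alpha|z|^2e^{-\alpha|y-y_0|^2}$ for $|z|<1$, while $h(y+z)-h(y)\ge-e^{-\alpha|y-y_0|^2}$ for $|z|\ge1$. Hence $Kh(y)\ge-e^{-\alpha|y-y_0|^2}\bigl[\nu(y,\{|z|\ge1\})+\alpha\int_{0<|z|<1}|z|^2\nu(y,dz)\bigr]$, and by (\ref{LM}) one gets $Lh\ge e^{-\alpha|y-y_0|^2}\bigl[4\gamma\alpha^2|y-y_0|^2-C\alpha-C\bigr]>0$ on $\{|y-y_0|\ge R/2\}$ for $\alpha$ large with $R$ fixed. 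This is what the paper's Step 2 is doing (there the integration is first restricted, via its Step 1, to the set $\{u=M\}$, and the continuity of $x\mapsto\nu(x,\cdot)$ is invoked to pass from $x^*$ to a neighborhood, but the matching of exponential factors is the essential point). With this estimate substituted for your uniform bound $|Kh|\le C_2\alpha$, your argument goes through; without it, the step you yourself call the technical heart remains open.
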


Before proving this theorem, let us first give some comments on it.

\begin{Remark}\label{tran}
  The propagation of maximizer point by translation of measure support mentioned in \cite{Cio,Cov} is similar in our case. That is, if the assumptions in Theorem \ref{smpe} hold, then $u$ is a constant on the set $\overline{\bigcup_{n=0}^\infty\varLambda_n}$, where $\varLambda_n$'s are defined by induction,
  \begin{equation*}
    \varLambda_0 = {x^0}, \varLambda_{n+1}= \bigcup_{x\in D\cap\varLambda_n}[\text{\upshape supp}\,\nu(x,\cdot)+x].
  \end{equation*}
  This result depends on the support of every measure $\nu(x,\cdot)$, it can be easily proved by induction and continuity. It is noteworthy that in this scheme, the set $D$ may not be connected, since jumps from one connected component to another might occur when measure supports overlap two or more connected components.

  In conclusion, it is the integro-differential term, or jump diffusion term that leads to the propagation of maximizer point between those connected components. Therefore, we need to restrict that the Markovian point can move by jumps only inside the set $E$, i.e., the support condition (\ref{supportNu2}), to obtain the propagation of maximizer (over $E$) point.
\end{Remark}

\begin{Remark}
  As shown in Remark \ref{special}, our results on the weak and strong maximum principles formulated in Theorem \ref{WMPe} and \ref{smpe}, respectively,    cover the situations when the support of jump measure is either bounded or unbounded, especially for $E=D$ or $E=\R^d$ in the setting. While   Taira \cite{KTaira} only considered the situation for $E=D$. Furthermore, our assumptions are less restrictive than Taira's: In our work,  the connectedness is not needed for the weak maximum principle while the boundedness is not necessary for the strong maximum principle. Moreover,  the continuity of mapping $x\to\nu(x,\cdot)$ is necessary only in the strong case but not for the weak maximum principle.
\end{Remark}

Like the weak case, by applying directly Theorem \ref{smpe} to $-u$, one can conclude the strong maximum principle for the converse case $Lu\le0$.

\begin{Corollary}\label{s_Lu<}
Let $D$ be an open and connected set   but not necessarily bounded,  and $E$ be an open set satisfying $D\subset E$.
  Assume that $u\in C^2(D)\cap C(\overline{E})$, $Lu\le0$ in $D$, and $\text{\upshape supp}\,\nu(x,\cdot)\subset\overline E-x$ for each $x\in D$. Moreover, assume that the mapping $x\to\nu(x,\cdot)$ is continuous in $D$. If one of the following conditions holds:
  \begin{enumerate}
    \item $c\equiv0$ in $D$ and $u$ achieves a (finite) minimum over $\overline E$ at an interior point in $D$;
    \item $c\le0$ in $D$ and $u$ achieves a (finite) nonnegative minimum over $\overline E$ at an interior point in $D$;
    \item $u$ achieves a zero minimum over $\overline E$ at an interior point in $D$,
  \end{enumerate}
  then $u$ is constant on $\overline D$.
\end{Corollary}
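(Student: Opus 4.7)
The plan is to derive this corollary as a direct dual of Theorem \ref{smpe} via the substitution $v := -u$. By linearity of $L$, one has $Lv = -Lu \ge 0$ in $D$, and the regularity $v \in C^2(D)\cap C(\overline{E})$ is inherited from $u$. The support condition on $\nu(x,\cdot)$ and the continuity of $x \mapsto \nu(x,\cdot)$ in $D$ are conditions on the operator itself, so they hold for $v$ without any modification. Under this substitution, minima of $u$ on $\overline{E}$ correspond to maxima of $v$ on $\overline{E}$ at the same interior points, with the sign of the extremum value flipped.

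For case 1, an interior minimum of $u$ over $\overline{E}$ at some $x^0 \in D$ corresponds to an interior maximum of $v$ at $x^0$, so case 1 of Theorem \ref{smpe} (with $c\equiv 0$) applies directly to $v$. For case 3, the zero minimum of $u$ at $x^0$ becomes a zero maximum of $v$, triggering case 3 of the theorem. For case 2, the hypothesis $c\le 0$ is unchanged, and the extremum value hypothesis on $u$ translates into the corresponding hypothesis for $v$ under the flip $u\mapsto -u$, so that case 2 of the theorem (with the sign conventions suitably dualised) applies to $v$.

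In each case, Theorem \ref{smpe} applied to $v$ yields that $v$ is constant on $\overline{D}$, which is equivalent to $u$ being constant on $\overline{D}$. I do not foresee any genuine obstacle in this reduction: the hard analytic substance (the weak maximum principle, the exponential barrier argument, and the propagation of the extremum through the connected components of $D$ via the support of the jump kernel) has already been carried out in the proof of Theorem \ref{smpe}; the present argument is essentially only the bookkeeping of sign changes arising from $u\leftrightarrow -u$.
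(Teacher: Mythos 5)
Your reduction via $v:=-u$ is exactly the paper's own proof (the paper disposes of this corollary in one line: ``by applying directly Theorem \ref{smpe} to $-u$''), and for cases 1 and 3 it is complete: $Lv=-Lu\ge0$ in $D$, the hypotheses on $\nu$ are untouched, and a finite minimum (resp.\ zero minimum) of $u$ at an interior point of $D$ becomes a finite maximum (resp.\ zero maximum) of $v$ there, so cases 1 and 3 of Theorem \ref{smpe} apply verbatim.

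Case 2 is where your phrase ``with the sign conventions suitably dualised'' conceals a real problem. If $u$ attains a \emph{nonnegative} minimum $m\ge0$, then $v=-u$ attains the maximum $-m\le0$, i.e.\ a \emph{nonpositive} maximum, and case 2 of Theorem \ref{smpe} demands a \emph{nonnegative} maximum; the reduction therefore does not reach the statement as printed. Nor can it: take $d=1$, $D=(-1,1)$, $E=\R$, $\nu\equiv0$, $a_{11}=1$, $b=0$, $c\equiv-3$ and $u(x)=1+x^2$. Then $Lu=2-3(1+x^2)<0$ in $D$, and $u$ attains its finite positive minimum over $\overline E$ at the interior point $0$, yet $u$ is not constant on $\overline D$. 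The genuine dual of case 2 of Theorem \ref{smpe} is ``$c\le0$ and $u$ achieves a (finite) \emph{nonpositive} minimum''---which is precisely how the paper states the parabolic analogue, Corollary \ref{sp_<}---and with that reading your argument goes through unchanged. So the route is the right one, but you should write out explicitly what the dualised hypothesis is in each case; had you done so, you would have discovered that case 2 of the corollary as printed carries a sign error rather than certifying it with a vague appeal to ``suitable dualisation''.
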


\vspace{3mm}
Now we start to prove Theorem \ref{smpe}.

\begin{proof}[Proof of Theorem \ref{smpe}]
  Suppose that $u\not\equiv\max_{\overline E}u$ in $D$. Set $D_<:=\{x\in D\mid u(x)<\max_{\overline E}u\}\ne\emptyset$. Since $D$ is connected which implies $\partial D_<\cap D\ne\emptyset$, we can always choose a point $x^1\in D_<$ such that $\text{dist}(x^1,\partial D_<\cap D)<\text{dist}(x^1,\partial D)$. Denote by $B$ the largest ball having $x^1$ as center with $B\subset D_<$. Then $\overline{B}\subset D$ and there exists some point $x^0\in\partial B$ with
\begin{equation*}
  u(x^0)=\max_{\overline E}u>u(x),\quad\forall x\in B.
\end{equation*}
Since $u$ achieves its maximum at $x^0\in D$, we have $\nabla u(x^0)=0$. We will create a contradiction by proving that
\begin{equation}\label{contradiction}
  \frac{\partial u}{\partial {\bf n}}(x^0)>0,
\end{equation}
where ${\bf n}$ is the unit outer normal vector of $B$ at $x^0$. Then by this contradiction, $u$ must be constant within $D$, and the result follows by continuity. Now the rest of the proof is devoted to \eqref{contradiction}. We divide it into three steps.

\vspace{1.5mm}
\textit{Step 1.} The closed set $\overline{B}$ is a $d$-dimension $C^2$-differential manifold with boundary. Let $(U,\mathbf{\Phi})$ be a coordinate chart near $x^0$, where $U$ is a relatively open neighborhood of $x^0$ in $\overline{B}$, $\mathbf{\Phi}$ is a $C^2$-diffeomorphism to its image from $U$ into the closed upper half plane $\mathbb{H}^d_+:=\{y\in\mathbb{R}^d\mid y_d\ge0\}$, with inverse $\mathbf{\Phi}^{-1}$. Then $\mathbf{\Phi}$ is an embedding whose rank at $x^0$ equals to $d$, equivalently, if we denote by $J\mathbf{\Phi}$ the Jacobian matrix of $\mathbf{\Phi}$, i.e., $J\mathbf{\Phi}:=\nabla_x\mathbf{\Phi}$, then $J\mathbf{\Phi}$ is non-degenerate. As a result, the tangent mapping $\mathbf{\Phi}_\ast$ induced by $\mathbf{\Phi}$ at point $x^0$ is an isomorphism.

Now we consider the function $u$ restricted in $U$. We define    $\hat{u}(y):=u(\mathbf{\Phi^{-1}}(y)), y\in\mathbf{\Phi}(U)$. Then $\hat{u}$ attains its maximum at $y^0=\mathbf{\Phi}(x^0)$ over $\mathbf{\Phi}(U)\subset\mathbb{H}^d_+$. Hence at the maximizer point $y^0$,
\begin{equation}\label{d-1}
  \frac{\partial\hat{u}}{\partial y_j}=0,\quad j=1,...,d-1.
\end{equation}
We also denote the image tangent vector of $\frac{\partial}{\partial\mathbf{n}}$ under tangent mapping $\mathbf{\Phi}_\ast$ by
\begin{equation*}
  \frac{\partial}{\partial\hat{\mathbf{n}}}:= \mathbf{\Phi}_\ast\Big(\frac{\partial}{\partial\mathbf{n}}\Big).
\end{equation*}
We compute at $y^0$ (or    $x^0$)
\begin{equation}\label{u_n}
  \begin{split}
     &\ \frac{\partial\hat{u}}{\partial\hat{\mathbf{n}}} = \bigg\langle\mathbf{\Phi}_\ast\Big(\frac{\partial} {\partial\mathbf{n}}\Big),d\hat{u}\bigg\rangle= \bigg\langle\frac{\partial} {\partial\mathbf{n}},\mathbf{\Phi}^\ast(d\hat{u})\bigg\rangle \\
       =&\ \bigg\langle\frac{\partial}{\partial\mathbf{n}}, d(\hat{u}\circ\mathbf{\Phi})\bigg\rangle= \bigg\langle\frac{\partial}{\partial\mathbf{n}},du\bigg\rangle = \frac{\partial u}{\partial\mathbf{n}}=0,
  \end{split}
\end{equation}
where $\mathbf{\Phi}^\ast$ is denoted as the cotangent mapping induced by $\mathbf{\Phi}$ at point $x^0$, $\langle\cdot,\cdot\rangle$ is the dual product between the tangent space and cotangent space at $y^0$ (or $x^0$). Now recall  that $\mathbf{\Phi}_\ast$ is an isomorphism. The tangent vector $\frac{\partial}{\partial\hat{\mathbf{n}}}$ is independent of $\{\frac{\partial}{\partial y_j}\mid j=1,...,d-1\}$ and consequently by (\ref{u_n}),
\begin{equation}\label{d}
  \frac{\partial\hat{u}}{\partial y_d}(y^0)=0.
\end{equation}
Combining (\ref{d-1}) and (\ref{d}) together with the fact that $\hat{u}$ attains its maximum at $y^0$, we have
\begin{equation}\label{d2}
  \nabla_y^2\hat{u}(y^0)\le0.
\end{equation}

Combining (\ref{d-1}), (\ref{d}) and (\ref{d2}), we have at $x^0$,
\begin{align*}
  \nabla_x u &= (J\mathbf{\Phi})^T(\nabla_y\hat{u})=0, \\
  \nabla_x^2 u &= (J\mathbf{\Phi})^T(\nabla_y^2\hat{u})(J\mathbf{\Phi}) +(\nabla^2_x\mathbf{\Phi})(\nabla_y\hat{u})= (J\mathbf{\Phi})^T(\nabla_y^2\hat{u})(J\mathbf{\Phi}),
\end{align*}
where we treat $\nabla^2_x\mathbf{\Phi}$ as a third-order covariant tensor. Hence at $x^0$,
\begin{equation}\label{Au}
  \begin{split}
     Au &= \text{tr}[\mathbf{a}^T(\nabla^2_x u)]+b^T\nabla_x u+cu \\
       &= \text{tr}[\mathbf{a}^T(J\mathbf{\Phi})^T(\nabla_y^2 \hat{u})(J\mathbf{\Phi})]+cu \\
       &= \text{tr}[(J\mathbf{\Phi})\mathbf{a}^T (J\mathbf{\Phi})^T(\nabla_y^2 \hat{u})]+cu \\
       &= \text{tr}\big[\big((J\mathbf{\Phi})\mathbf{a}(J\mathbf{\Phi})^T\big)^T (\nabla_y^2\hat{u})\big]+cu \\
       &=: \text{tr}[\hat{\mathbf{a}}^T(\nabla^2_y\hat{u})]+cu,
  \end{split}
\end{equation}
where $\hat{\mathbf{a}}:=(J\mathbf{\Phi})\mathbf{a}(J\mathbf{\Phi})^T$. Since $\mathbf{a}(x)$ is symmetric and positive definite and the matrix $J\mathbf{\Phi}$ is non-degenerate, we see the matrix $\hat{\mathbf{a}}(x^0)$ is also symmetric and positive definite.
Hence, as explained in the proof of Theorem \ref{WMPe} and by   (\ref{d2}),  we have
\begin{equation}\label{ahat}
  \text{tr}\big[\hat{\mathbf{a}}(x^0)^T\big(\nabla^2_y\hat{u}(y^0)\big)\big] \le0.
\end{equation}

Define
\begin{equation}\label{E0}
  E_0:=\big\{x\in \overline E\mid u(x)=\max_{\overline E}u\big\}=\big\{x\in\overline E\setminus B\mid u(x)=\max_{\overline E}u\big\}.
\end{equation}
Recall that $u$ attains its maximum over $\overline{E}$ at $x^0$. Now we have
\begin{equation}\label{Ku}
  \begin{split}
     Ku(x^0) &= \int_{\mathbb{R}^d\setminus\{0\}}\big[u(x^0+z)-u(x^0)- z^T\nabla u(x^0)\cdot\mathbf{1}_{\{|z|<1\}}\big]\nu (x^0,dz) \\
       &= \int_{\overline E}[u(x^0+z)-u(x^0)]\nu (x^0,dz) \\
       &= \int_{x^0+z\in\overline E\setminus E_0}[u(x^0+z)-u(x^0)]\nu (x^0,dz) \\
       &\le 0.
  \end{split}
\end{equation}

From (\ref{Au}), (\ref{ahat}) and (\ref{Ku}), we obtain
$$Lu(x^0)=Ku(x^0)+Au(x^0)\le c(x^0)u(x^0)\le 0.$$
By recalling the assumption on $u$, we have $Lu(x)\ge 0$ for each $x\in D$, and thus
\begin{equation*}
  Lu(x^0)=Au(x^0)=Ku(x^0)=0,
\end{equation*}
especially,
\begin{equation*}
  Ku(x^0) = \int_{x^0+z\in\overline E\setminus E_0}[u(x^0+z)-u(x^0)]\nu (x^0,dz) = 0.
\end{equation*}
Hence, we conclude
\begin{equation}\label{nu}
  \nu(x^0,(\overline E\setminus E_0)-x^0)=0.
\end{equation}

\vspace{1.5mm}
\textit{Step 2.} We set $B=B(x^1,R)$ with $R=|x^0-x^1|$. See Figure \ref{fig:Hopf}. Define
$$v(x):=e^{-\beta|x-x^1|^2}-e^{-\beta R^2},\quad x\in\overline{E},$$
for $\beta>0$ as selected below. Then

\begin{figure}[!bth]
  \centering
  \includegraphics[width=\textwidth]{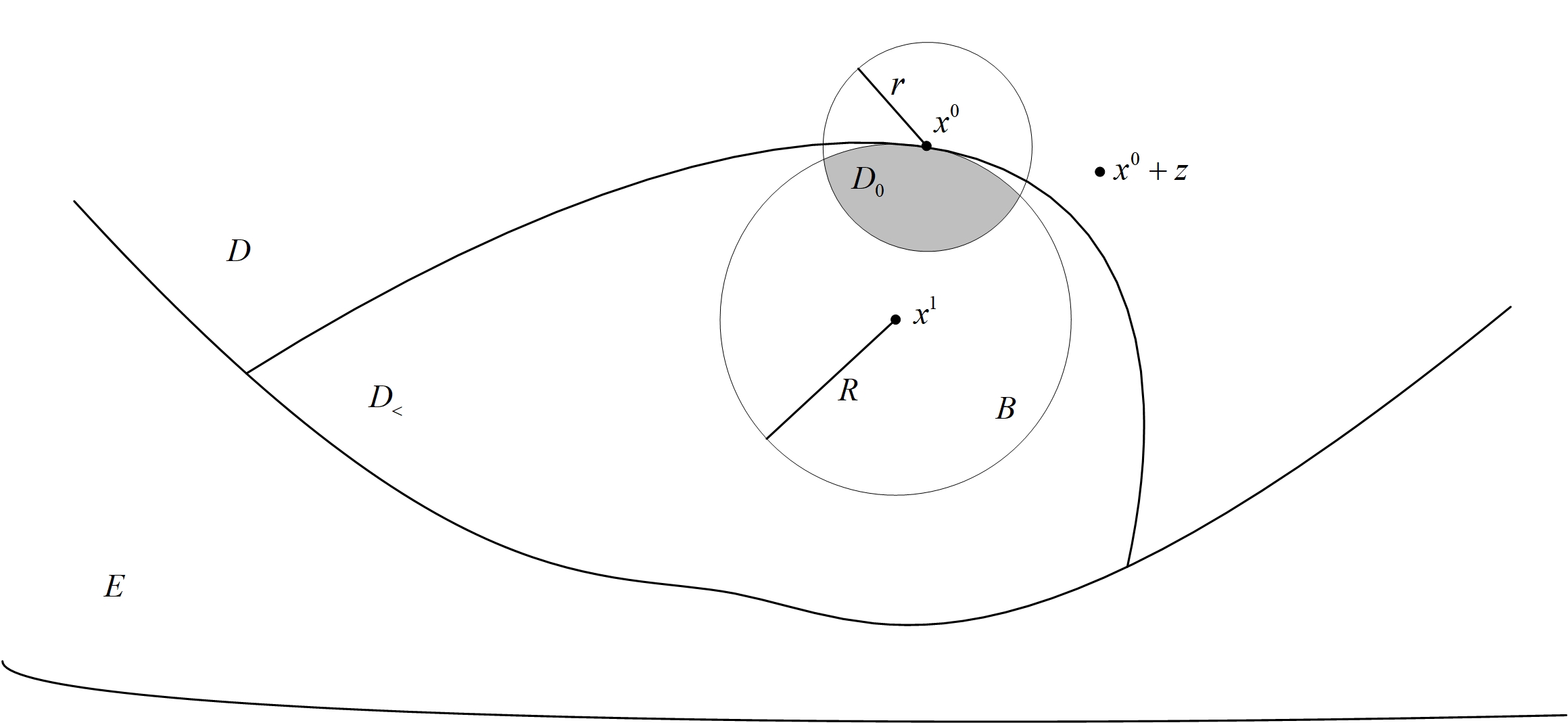}
  \caption{Sketch for Theorem \ref{smpe}.} \label{fig:Hopf}
\end{figure}

\begin{equation}\label{Av1}
  \begin{split}
     Av =&\ e^{-\beta|x-x^1|^2}\Big\{\text{tr}\big[\mathbf{a}^T \big(4\beta^2(x-x^1)\otimes(x-x^1)-2\beta I\big)\big] \\
       &\qquad\qquad\ \ -2\beta b^T(x-x^1)+c\big(1-e^{-\beta(R^2-|x-x^1|^2)}\big)\Big\} \\
       =&\ e^{-\beta|x-x^1|^2}\big[4\beta^2(x-x^1)^T\mathbf{a}^T (x-x^1)-2\beta\text{tr}(\mathbf{a}) \\
       &\qquad\qquad\ \ -2\beta b^T(x-x^1)+c\big(1-e^{-\beta(R^2-|x-x^1|^2)}\big)\big] \\
       \ge&\ e^{-\beta|x-x^1|^2}\big[4\gamma\beta^2|x-x^1|^2-2\beta\textrm{tr} (\mathbf{a})-2\beta|b||x-x^1| \\
       &\qquad\qquad\ \ +c\big(1-e^{-\beta(R^2-|x-x^1|^2)}\big)\big].
  \end{split}
\end{equation}
Consider next the open set $D_0:=B(x^1,R)\cap B(x^0,r)$ (see Figure \ref{fig:Hopf}) with some $r\in(0,R)$ which will be chosen later. When $\beta$ is large enough, we have
\begin{equation}\label{Av2}
  Av \ge e^{-\beta R^2}\big[4\gamma\beta^2(R-r)^2-2\beta\text{tr} (\mathbf{a})-2\beta|b|R\big] > C_1\beta^2-C_2\beta,
\end{equation}
for $x\in D_0$, where $C_1,C_2$ are two positive constants.

Moreover, by recalling (\ref{nu}), we have
\begin{equation}\label{Kv}
  \begin{split}
     Kv(x^0) =& \int_{\mathbb{R}^d\setminus{\{0\}}}\big[e^{-\beta|x^0-x^1+z|^2} -e^{-\beta R^2} \\
       &\qquad\quad\,\ +2\beta z^T(x^0-x^1)e^{-\beta R^2}\mathbf{1}_{\{|z|<1\}}\big]\nu(x^0,dz) \\
       =& \int_{\substack{x^0+z\in E_0 \\ |z|\ge1}}\big[e^{-\beta|x^0-x^1+z|^2} -e^{-\beta R^2}\big]\nu(x^0,dz) \\
       &+ \int_{\substack{x^0+z\in E_0 \\ 0<|z|<1}}\big[e^{-\beta|x^0-x^1+z|^2} -e^{-\beta R^2} \\
       &\qquad\qquad\quad\ +2\beta z^T(x^0-x^1)e^{-\beta R^2}\big]\nu(x^0,dz) \\
       =:&\ \uppercase\expandafter{\romannumeral1} +\uppercase\expandafter{\romannumeral2}.
  \end{split}
\end{equation}

For the term $\uppercase\expandafter{\romannumeral1}$, it is clear that $E_0\cap\overline B=\{x^0\}$ and consequently
\begin{equation*}
  |x^0+z-x^1|>|x^0-x^1|=R
\end{equation*}
for point $z$ satisfying $x^0+z\in\overline E\setminus E_0$. Thus for sufficiently large $\beta$, we have
\begin{equation*}
  -C_3<e^{-\beta|x^0-x^1+z|^2}-e^{-\beta R^2}<0
\end{equation*}
with a constant $C_3>0$. Hence,
\begin{equation}\label{I1}
  \uppercase\expandafter{\romannumeral1} > -C_3\int_{\substack{x^0+z\in E_0 \\ |z|\ge1}}\nu(x^0,dz)\gtrsim-C_3.
\end{equation}

For the term $\uppercase\expandafter{\romannumeral2}$, using the Taylor expansion,  and for $x^0+z\in E_0$ and $\beta$   large enough,
\begin{equation*}
  \begin{split}
     &\ e^{-\beta|x^0-x^1+z|^2} -e^{-\beta R^2}+2\beta z^T(x^0-x^1)e^{-\beta R^2} \\
     =&\ \frac{1}{2} \big[4\beta^2e^{-\beta|x^0-x^1+\theta z|^2}| z^T(x^0-x^1+\theta z)|^2-2\beta e^{-\beta|x^0-x^1+\theta z|^2}|z|^2\big] \\
     \ge& -\beta e^{-\beta|x^0-x^1+\theta z|^2}|z|^2 \\
     \ge& -C_4\beta|z|^2,
  \end{split}
\end{equation*}
with some $\theta\in(0,1)$ and a constant $C_4>0$. Hence,
\begin{equation}\label{I2}
  \uppercase\expandafter{\romannumeral2} > -C_4\beta\int_{\substack{x^0+z\in E_0 \\ 0<|z|<1}}|z|^2\nu(x,dz)\gtrsim-C_4\beta.
\end{equation}

Thus, combining the results of (\ref{Av2}), (\ref{Kv}), (\ref{I1}) and (\ref{I2}), we find that
\begin{align*}
  Lv(x^0) &= Av(x^0)+Kv(x^0)=Av(x^0)+\uppercase\expandafter{\romannumeral1} +\uppercase\expandafter{\romannumeral2} \\
  &\gtrsim C_1\beta^2-(C_2+C_4)\beta-C_3 \\
  &> 0,
\end{align*}
provided $\beta>0$ is fixed large enough. Since $Lv(x)$ is continuous in $x\in D$ in light of the continuity of $\nu(x,\cdot)$, we have
\begin{equation}\label{Lv}
  Lv(x)\ge0,
\end{equation}
for $x\in D_0$, provided $r$ is small enough.

\vspace{1.5mm}
\textit{Step 3.} Define
\begin{equation*}
  u^\epsilon(x)=u(x)+\epsilon v(x)-u(x^0),\quad x\in\overline{E},
\end{equation*}
for a constant $\epsilon>0$.
We can choose $\epsilon$ so small that
\begin{equation*}
  u^\epsilon(x)\le0,\quad x\in\overline{E}\setminus D_0,
\end{equation*}
since $v(x)\le0$ for $x\in\overline{E}\setminus B$, and $u(x)<u(x^0)$ for $x\in B\setminus D_0$ by recalling $u(x^0)>u(x)$ for all $x\in D_<$.

For the first two cases, $c\equiv0$ in $D$, or $c\le0$ in $D$ also $u(x^0)\ge0$, from (\ref{Lv}) and the fact that $Lu\ge0$ in $D$, we see that
\begin{equation*}
  Lu^\epsilon\ge-cu(x^0)\ge0 \quad\text{in }D_0.
\end{equation*}
In view of the weak maximum principle of elliptic Waldenfels operator, Theorem \ref{WMPe},   we know that $u^\epsilon\le 0$ in $\overline{E}$. Note that $u^\epsilon(x^0)=0$. Thus we have,
\begin{equation*}
  0=\frac{\partial u^\epsilon}{\partial\mathbf{n}}(x^0)=\frac{\partial u}{\partial\mathbf{n}}(x^0)+\epsilon\frac{\partial v}{\partial\mathbf{n}}(x^0).
\end{equation*}
Consequently,
\begin{equation*}
  \frac{\partial u}{\partial\mathbf{n}}(x^0)=-\epsilon\frac{\partial v}{\partial\mathbf{n}}(x^0)=-\epsilon\nabla v(x^0)\cdot\frac{(x^0-x^1)}{R}=2\epsilon\beta Re^{-\beta R^2}>0,
\end{equation*}
as required.

For the third case that $u(x^0)=0$, obviously $u\le0$ in $D$. We find
\begin{equation*}
  (L-c^+)u=Lu-c^+u\ge Lu\ge0 \quad\text{in }D.
\end{equation*}
Notice that the zeroth-order coefficient of operator $L-c^+$ is $c-c^+$, which is nonpositive in $D$. Hence we apply the result of the second case by replacing $L$ and $c$ respectively with $L-c^+$ and $c-c^+$ to get the same result for this case.

We have thus completed the proof.
\end{proof}

Some comments will be helpful for understanding the long proof of Theorem \ref{smpe}.
\begin{Remark}\label{component}
  In Theorem \ref{smpe}, we restrict the set $D$ to be connected to ensure $\partial D_<\cap D\ne\emptyset$. More generally, if $D$ is not connected, one may merely replace $D$ with the connected component of $D$ which contains the maximizer point, and we thus conclude that $u$ is constant in this connected component.

Recalling Remark \ref{lead}, we could see that the diffusion term gives rise to the propagation of maximizer point in the  corresponding connected component. This is why we need the set $D$ to be connected.
\end{Remark}

\begin{Remark}\label{lead}
  We can see from (\ref{Av2}), (\ref{I1}) and (\ref{I2}) that, it is the second-order differential term $\textrm{\upshape tr}[\mathbf{a}^T(\nabla^2)]$, namely, the diffusion term that plays a leading role in Step 2 in the proof of Theorem \ref{smpe}.
\end{Remark}

\begin{Remark}\label{psd}
  Theorem \ref{smpe} still holds if the matrix $\mathbf{a}(x)=(a_{jk}(x))_{j,k=1,...,d}$ is only positive semidefinite and the unit outer normal vector $\mathbf{n}$ is not in the nullspace of $\mathbf{a}(x^0)$.

  In fact, recall that $\hat{\mathbf{a}}=(J\mathbf{\Phi})\mathbf{a}(J\mathbf{\Phi})^T$ is also semidefinite as the Jacobian matrix $J\mathbf{\Phi}$ is invertible. Due to the reason mentioned in Remark \ref{WMPde}, we confirm that (\ref{ahat}) still holds. Moreover, noting that there exists a positive constant $\gamma$ such that $\mathbf{n}^Ta(x^0)\mathbf{n}\ge\gamma>0$ with $\mathbf{n}$ not in the nullspace of $\mathbf{a}(x^0)$, and consequently $(x^0-x^1)^Ta(x^0)(x^0-x^1)\ge\gamma|x^0-x^1|^2$. By   continuity we can choose $r$  so small that for all $x\in D_0=B(x^1,R)\cap B(x^0,r)$,
  \begin{equation*}
    (x-x^1)^Ta(x)(x-x^1)\ge\gamma_1|x-x^1|^2,
  \end{equation*}
  with a positive constant $\gamma_1$. Hence (\ref{Av1}) holds with $\gamma_1$ in placing of $\gamma$ and (\ref{Av2}) also holds for some other   constants $C_1, C_2$.
\end{Remark}

By a similar way to prove \eqref{contradiction}, we can easily obtain the following version of Hopf's boundary point lemma, which is a generalization of \cite[Lemma C.3]{KTaira}.

\begin{Proposition}[Hopf's boundary point lemma for elliptic Waldenfels operators]\label{Hopf}
   Let $D$ be an open set (not necessarily bounded or connected) with boundary $\partial D$ being $C^2$. Assume that $u\in C^2(\overline{D})$, $Lu\ge0$ in $D$, and $\text{\upshape supp}\,\nu(x,\cdot)\subset\overline D-x$ for each $x\in D$, and furthermore  the mapping $x\to\nu(x,\cdot)$ is continuous in $D$. Suppose that $u$ achieves its (finite)  maximum over $\overline{D}$ at point $x^0\in\partial D$ such that $u(x^0)>u(x)$ for all $x\in D$, and  that    one of the following conditions holds:
\begin{enumerate}
  \item $c\equiv0$ in $D$;
  \item $c\le0$ in $D$ and $u(x^0)\ge0$;
  \item $u(x^0)=0$.
\end{enumerate}
Then the outer normal derivative is positive: $\frac{\partial u}{\partial {\bf n}}(x^0)>0$.
\end{Proposition}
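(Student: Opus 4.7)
The argument follows the template of Steps 2 and 3 of the proof of Theorem \ref{smpe}, with the interior tangent ball supplied by the $C^2$-regularity of $\partial D$ in place of the ball $B \subset D_<$ used there. Since $\partial D \in C^2$ and $x^0 \in \partial D$, the interior ball condition yields $x^1 \in D$ and $R > 0$ with $B := B(x^1, R) \subset D$, $x^0 \in \partial B$, and outer unit normal $\mathbf{n} = (x^0 - x^1)/R$. The strict inequality $u(x^0) > u(x)$ for every $x \in D$ makes $x^0$ the only point of $\overline{B}$ where the maximum is attained.

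With the same barrier and localization as in Theorem \ref{smpe}, the plan is to take
$$v(x) := e^{-\beta|x - x^1|^2} - e^{-\beta R^2}, \qquad D_0 := B(x^1, R) \cap B(x^0, r),$$
for $\beta > 0$ large and $r \in (0, R)$ small, and to show $Lv \geq 0$ on $D_0$. The local part is handled exactly as in (\ref{Av1})--(\ref{Av2}), giving $Av(x) \geq C_1 \beta^2 - C_2 \beta - C_3$ for $x \in D_0$ (the bounded term $cv$ absorbed into $C_3$). The concentration property (\ref{nu}) enjoyed in Theorem \ref{smpe} is no longer available here since $x^0$ is a boundary point rather than an interior extremum, so the support of $\nu(x, \cdot)$ cannot be shrunk to a favourable set. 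This obstacle is bypassed by uniform bounds: since $|v| \leq 1$ on $\R^d$, the $|z| \geq 1$ part of $Kv(x)$ is bounded below by $-2\sup_y \nu(y, \{|z| \geq 1\})$, finite by (\ref{LM}); and the Taylor-remainder estimate of (\ref{I2}) applies verbatim at any $x \in D_0$, yielding a lower bound $-C_4 \beta \sup_y \int_{|z|<1}|z|^2 \nu(y, dz)$, also finite by (\ref{LM}). Hence $Lv(x) \geq C_1 \beta^2 - C_5 \beta - C_6$ on $D_0$, strictly positive once $\beta$ is fixed large enough.

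With such $\beta$, set $u^\epsilon := u + \epsilon v - u(x^0)$. I would then verify $u^\epsilon \leq 0$ on $\overline{D} \setminus D_0$ exactly as in Step 3 of Theorem \ref{smpe}: on $\overline{D} \setminus B(x^1, R)$, both $v \leq 0$ and $u \leq u(x^0)$; on the compact set $\overline{D} \cap (B(x^1, R) \setminus B(x^0, r))$, the strict inequality $u(x^0) > u(x)$ yields a uniform gap that is absorbed by choosing $\epsilon$ small. In all three hypothesis cases one gets $Lu^\epsilon \geq 0$ on $D_0$ by the same reductions as in Theorem \ref{smpe}: case 2 uses $-cu(x^0) \geq 0$, and case 3 reduces to case 2 by passing from $L$ to $L - c^+$ (since $u \leq 0 = u(x^0)$ in $D$). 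The weak maximum principle (Theorem \ref{WMPe}), applied with localization set $D_0$ and ambient set $D$ (whose support condition is inherited since $D_0 \subset D$), gives $u^\epsilon \leq 0$ on $\overline{D}$. Because $u^\epsilon(x^0) = 0$, the outer normal derivative satisfies $0 \geq \frac{\partial u^\epsilon}{\partial \mathbf{n}}(x^0)$, whence
$$\frac{\partial u}{\partial \mathbf{n}}(x^0) \;\geq\; -\epsilon\,\frac{\partial v}{\partial \mathbf{n}}(x^0) \;=\; 2\epsilon\beta R\, e^{-\beta R^2} \;>\; 0.$$

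The principal obstacle will be the $Kv$ estimate: Theorem \ref{smpe} benefitted from (\ref{nu}), which trimmed the effective integration domain to $E_0 - x^0$ where the integrand has a definite sign, whereas here $\nu(x,\cdot)$ retains its full support and contributes both signs. The saving observation is that $v$ is globally bounded and that (\ref{LM}) provides uniform control of both the large-jump mass and the small-jump second moment, so the large-jump piece contributes only $O(1)$ and the small-jump piece only $O(\beta)$; both are dominated by the $O(\beta^2)$ growth from the diffusion term. Apart from this one departure, every step is a faithful adaptation of the proof of Theorem \ref{smpe}.
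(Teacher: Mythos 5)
Your overall plan---interior tangent ball, the barrier $v$, the localized weak maximum principle on $D_0\subset D$, and the three case reductions---is exactly the paper's intended adaptation of Steps 2--3 of Theorem \ref{smpe}, and you are right that Step 1 (which produces the concentration property (\ref{nu}) from the interior-maximum identities $\nabla u(x^0)=0$, $Lu(x^0)=0$) is unavailable at a boundary point. The genuine gap is in your substitute bounds for $Kv$: they are too crude to close the barrier inequality. On $D_0$ the positive term is $Av(x)=e^{-\beta|x-x^1|^2}\cdot O(\beta^2)\le C\beta^2e^{-\beta(R-r)^2}$, which tends to $0$ as $\beta\to\infty$ (the ``constant'' $C_1$ in (\ref{Av2}) secretly carries the factor $e^{-\beta R^2}$). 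Your large-jump bound $-2\sup_y\nu(y,\{|z|\ge1\})$ and small-jump bound $-C_4\beta\sup_y\int_{|z|<1}|z|^2\nu(y,dz)$ carry no such exponential factor, so the right-hand side of your claimed estimate $Lv\ge C_1\beta^2-C_5\beta-C_6$ in fact tends to $-\infty$; positivity for large $\beta$ does not follow. (The paper's own (\ref{I1})--(\ref{I2}) are loose on the same point, but there the restriction to $E_0$ at least pins the large-jump integrand between $-e^{-\beta R^2}$ and $0$.)

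The gap is repairable, and the repair vindicates your claim that no concentration property is needed. Set $s:=|x+z-x^1|^2-|x-x^1|^2$; then for every $x\in D_0$ and every $z\ne 0$,
\begin{equation*}
  v(x+z)-v(x)-z^T\nabla v(x)\,\mathbf{1}_{\{|z|<1\}}
  =e^{-\beta|x-x^1|^2}\Bigl[\bigl(e^{-\beta s}-1+\beta s\,\mathbf{1}_{\{|z|<1\}}\bigr)-\beta|z|^2\,\mathbf{1}_{\{|z|<1\}}\Bigr],
\end{equation*}
and the elementary inequality $e^{-t}-1+t\ge0$ gives the integrand $\ge-\beta|z|^2e^{-\beta|x-x^1|^2}$ for $|z|<1$ and $\ge-e^{-\beta|x-x^1|^2}$ for $|z|\ge1$. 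Hence $Kv(x)\ge-e^{-\beta|x-x^1|^2}(M_1+\beta M_2)$ with $M_1,M_2$ the suprema furnished by (\ref{LM}); every term of $Lv(x)$ now shares the common prefactor $e^{-\beta|x-x^1|^2}$, and the bracket is $4\gamma\beta^2(R-r)^2-O(\beta)>0$ for $\beta$ large, uniformly on $D_0$ and without invoking the continuity of $x\mapsto\nu(x,\cdot)$. With that replacement the rest of your argument goes through, modulo one sign: since $u^\epsilon\le0=u^\epsilon(x^0)$ on $\overline D$ and $\mathbf{n}$ points outward, the correct intermediate inequality is $\frac{\partial u^\epsilon}{\partial\mathbf{n}}(x^0)\ge0$, not $\le0$; your final displayed conclusion is the one that follows from the correct sign.
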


In fact, if we let $E=D$ and replace $B$ by $D$ in Step 1 in the proof of Theorem \ref{smpe}, also replace $D_<$ by $D$ in Step 3, then the three-step argument also works in the context of Proposition \ref{Hopf} and the result follows.

\section{Maximum principles for parabolic Waldenfels operators}\label{mpp}

We assume   that $D$, $E$ are two open sets in $\R^d$  and $D\subset E$, where $E$ is not necessarily bounded. Set $D_T:=D\times(0,T]$ and $E_T:=E\times(0,T]$ for arbitrarily fixed $T>0$.

As in
\cite{Komatsu73,Komatsu00,Str,Stroock,Kolokoltsov}, we define a time dependent elliptic Waldenfels operator
\begin{equation}\label{WoperatorTime}
  L := A+K,
\end{equation}
where $A$ and $K$ are defined as, respectively
\begin{equation*}
  \begin{split}
     Au(x,t) &:= \sum^d_{j,k=1}a_{jk}(x,t)\frac{\partial^2 u}{\partial x_j\partial x_k}(x,t)+\sum^d_{j=1}b_{j}(x,t)\frac{\partial u}{\partial x_j}(x,t)+c(x,t)u(x,t), \\
     Ku(x,t) &:= \int_{\R^d\setminus\{0\}}\Big[u(x+z,t)-u(x,t)-\sum^d_{j=1}z_j \frac{\partial u}{\partial x_j}(x,t)\mathbf{1}_{\{|z|<1\}}\Big]\nu(t,x,dz).
  \end{split}
\end{equation*}
  We make the following assumptions:
\begin{enumerate}
  \item Continuity condition: $a_{jk},b_j,c\in C(\overline{E_T})$ $(j,k=1,...,d).$
  \item Symmetry condition: $a_{jk}=a_{kj}$ $(j,k=1,...,d)$.  \\
  Uniform ellipticity condition:
      There exists a constant $\gamma>0$ such that
      \begin{equation*}
        \sum^d_{j,k=1}a_{jk}(x,t)\xi_j\xi_k\ge\gamma|\xi|^2,
      \end{equation*}
      for all $(x,t)\in D_T$, $\xi\in\R^d$.
  \item L\'evy measures: The kernel $\{\nu(t,x,\cdot)\mid (x,t)\in\R^d\times[0,T]\}$ is a family of L\'evy measures, namely, each $\nu(t,x,\cdot)$ is a Borel measure on $\mathbb{R}^d\setminus\{0\}$ such that for all $(x,t)\in\R^d\times[0,T]$,
      \begin{equation}\label{LMt}
        \int_{\mathbb{R}^d\setminus\{0\}}(1\land|z|^2)\nu(t,x,dz)<\infty,
      \end{equation}
      and moreover, for fixed $U\in\mathcal{B}(\mathbb{R}^d\setminus\{0\})$, the mapping $\R^d\times[0,T]\ni(x,t)\to\nu(t,x,U)\in[0,\infty)$ is Borel measurable. Here we further assume that for each $(x,t)\in D_T$,  the measure $\nu(t,x,\cdot)$ is supported in $\overline E-x:=\{y-x\mid y\in\overline E\}=\{z\mid x+z\in\overline E\}$. That is,
      \begin{equation}\label{supportNu}
        \text{\upshape supp}\,\nu(t,x,\cdot)\subset\overline E-x,\quad \forall (x,t)\in D_T.
      \end{equation}
\end{enumerate}

The Markov process associated with such a generator $L$ can be determined as a solution to the martingale problem
induced by $L$ (see, e.g., \cite{Stroock}). However, it is not clear if the Markov process determined by the martingale
problem is linked to a stochastic differential equation   with certain boundary conditions.

Now we consider the  parabolic Waldenfels operator
  $$-\frac{\partial}{\partial t}+L,$$
 with $L$ being defined in (\ref{WoperatorTime}), and we are concerned with the maximum principles for such a parabolic operator.

\subsection{Weak maximum principle for parabolic case}

We are in the position to present both weak and strong maximum principles for parabolic Waldenfels operator $-\frac{\partial}{\partial t}+L$. First we prove the weak one.

\begin{Theorem}[Weak maximum principle for parabolic Waldenfels operators]\label{WMPp}
Let $D$ be an open and bounded set but not necessarily connected, and $E$ be an open set satisfying $D\subset E$.
  Assume that $u\in C^{2,1}(D_T)\cap C(\overline{E_T})$, $-\frac{\partial u}{\partial t}+Lu\ge0$ in $D_T$, and $\text{\upshape supp}\,\nu(t,x,\cdot)\subset\overline E-x$ for each $(x,t)\in D_T$.
  \begin{enumerate}
    \item If $c\equiv0$ in $D_T$, then $$\sup_{\overline{E_T}}u=\sup_{\overline{E_T}\setminus D_T}u.$$
    \item If $c\le0$ in $D_T$, then $$\sup_{\overline{E_T}}u\le\sup_{\overline{E_T}\setminus D_T}u^+.$$
  \end{enumerate}
  Here the supremum may  be infinity.
\end{Theorem}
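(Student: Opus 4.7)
The plan is to mirror the proof of the elliptic weak maximum principle (Theorem \ref{WMPe}): first handle the strict inequality case $-\partial_t u + Lu > 0$ by deriving a contradiction at a putative interior maximizer, then remove strictness via a perturbation, and finally reduce Assertion 2 to Assertion 1 through the positive-part subdomain $D_{T,+}$.

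For Assertion 1 in the strict case, suppose for contradiction that $\sup_{\overline{E_T}} u > \sup_{\overline{E_T}\setminus D_T} u$, so the supremum is attained at some $(x^0,t^0) \in D_T$. Since $x^0$ is interior in $D$, the spatial conditions $\nabla_x u(x^0,t^0) = 0$ and $\nabla_x^2 u(x^0,t^0) \le 0$ hold, and the orthogonal-diagonalization argument from Theorem \ref{WMPe} yields $\mathrm{tr}[\mathbf{a}^T \nabla_x^2 u](x^0,t^0) \le 0$. The support condition (\ref{supportNu}) together with $\nabla_x u(x^0,t^0)=0$ collapses $Ku(x^0,t^0)$ to $\int_{\overline{E}-x^0}[u(x^0+z,t^0)-u(x^0,t^0)]\,\nu(t^0,x^0,dz) \le 0$, since $x^0+z \in \overline{E}$ for $z$ in the support and $(x^0,t^0)$ is the global maximum. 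The parabolic wrinkle is the time derivative: if $t^0 \in (0,T)$ then $\partial_t u(x^0,t^0)=0$, while if $t^0 = T$ then $\partial_t u(x^0,T) \ge 0$ because $u(x^0,\cdot)$ attains its maximum at the right endpoint; in both cases $-\partial_t u(x^0,t^0) \le 0$. Combined with $c \equiv 0$, this gives $-\partial_t u + Lu \le 0$ at $(x^0,t^0)$, contradicting the strict inequality.

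To pass from strict to weak inequality, I would use the time-only perturbation $u^\epsilon(x,t) := u(x,t) - \epsilon t$ with $\epsilon > 0$; since $L$ acts only in $x$ and $c \equiv 0$, we have $Lu^\epsilon = Lu$, so $-\partial_t u^\epsilon + L u^\epsilon = (-\partial_t u + Lu) + \epsilon \ge \epsilon > 0$ in $D_T$. Applying the strict-case conclusion to $u^\epsilon$ and letting $\epsilon \to 0^+$ by continuity of $u$ on $\overline{E_T}$ proves Assertion 1. For Assertion 2, I would follow the elliptic template: if $u \le 0$ throughout $D_T$ the claim is trivial, so assume $D_{T,+} := \{(x,t) \in D_T : u(x,t) > 0\}$ is nonempty; it is open by continuity of $u$ and bounded as a subset of $D_T$. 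On $D_{T,+}$ the operator $\widetilde L := L - c\cdot$ has vanishing zeroth-order coefficient, and $-\partial_t u + \widetilde L u = (-\partial_t u + Lu) - cu \ge -cu \ge 0$ since $c \le 0$ and $u > 0$ there. Rerunning the argument of Assertion 1 on $D_{T,+}$ yields $\sup_{\overline{E_T}} u = \sup_{\overline{E_T}\setminus D_{T,+}} u$, which splits (as in the proof of Theorem \ref{WMPe}) into $\sup_{\overline{E_T}\setminus D_T} u^+$.

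The main obstacle I expect is the careful bookkeeping at the top time slice $t = T$: since $D_T = D\times(0,T]$ includes $T$, a maximizer can occur there, and only $\partial_t u \ge 0$ holds — fortunately the sign convention $-\partial_t u \le 0$ is precisely what the subsolution inequality needs, which is why the statement intentionally uses $(0,T]$ rather than $(0,T)$. A secondary technical point is that $D_{T,+}$ is not of product form $D' \times (0,T']$, so one must note that the interior-maximizer argument for Assertion 1 is purely pointwise at the maximum and therefore transfers verbatim to any open bounded subset of $D_T$.
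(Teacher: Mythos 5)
Your Assertion~1 is essentially the paper's proof verbatim: the same pointwise contradiction at a maximizer in $D_T$, with the same case split ($\partial_t u=0$ if $t^0<T$, $\partial_t u\ge 0$ if $t^0=T$) and the same time-linear perturbation $u^\epsilon=u-\epsilon t$. For Assertion~2, however, you take a genuinely different route from the paper --- and, interestingly, the very route the authors explicitly decline. The remark following Theorem~\ref{WMPp} points out that $D_T^+:=\{(x,t)\in D_T\mid u>0\}$ is never of the product form $U\times(0,T]$, so Assertion~1 cannot be invoked on it as a black box, and the parabolic analogue of Remark~\ref{self}(2) is not claimed. The paper instead argues directly: if $u$ had a positive maximum over $\overline{E_T}$ at a point of $D_T$, then $cu\le 0$ there and the pointwise computation again gives $-\partial_t u+Lu\le 0$, a contradiction in the strict case; in general one uses $u^\epsilon=u-\epsilon t$, for which $-\partial_t u^\epsilon+Lu^\epsilon=-\partial_t u+Lu+\epsilon-c\epsilon t>0$, noting that $u^\epsilon$ still has a positive maximizer in $D_T$ for small $\epsilon$. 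Your version survives the paper's objection precisely because you rerun the pointwise argument on $D_{T,+}$ rather than citing the theorem: the contradiction at a global maximizer of $u^\epsilon$ over $\overline{E_T}$ uses only that the maximizer lies in $D_T$ (giving $\nabla_x u^\epsilon=0$, $\nabla_x^2u^\epsilon\le 0$, $Ku^\epsilon\le 0$, $-\partial_t u^\epsilon\le 0$) and that the strict inequality for the zeroth-order-free operator holds at that one point, which it does on $D_{T,+}$; the product structure of the domain plays no role in this computation, and boundedness of $D_{T,+}\subset D_T$ still forces the maximizer inside it. What your route buys is the stronger identity $\sup_{\overline{E_T}}u^+=\sup_{\overline{E_T}\setminus D_T}u^+$, i.e.\ exactly the parabolic analogue of Remark~\ref{self}(2) that the paper says it cannot establish this way, whereas the paper's direct argument yields only the stated inequality. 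The one point you should make explicit in a write-up is that $D_{T,+}$ is only \emph{relatively} open in $D_T$ (its points with $t=T$ have no full space-time neighborhood), so ``open by continuity'' needs the same one-sided-derivative treatment at $t=T$ that you already give in Assertion~1; with that said, the argument is sound.
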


\begin{proof}
Assertion 1. We prove this by contradiction. Suppose that the strict inequality holds, i.e.,
\begin{equation}\label{strict}
  -\frac{\partial u}{\partial t}+Lu>0 \quad\text{in }D_T,
\end{equation}
but there exists a point $(x^0,t^0)\in D_T$ such that
\begin{equation*}
  u(x^0,t^0)=\max_{\overline{E_T}}u.
\end{equation*}

On one hand, as explained in the proof of Theorem \ref{WMPe}, we note that $Lu\le0$ at point $(x^0,t^0)$. On the other hand, if $0<t^0<T$, then $(x^0,t^0)\in(D_T)^\circ$ and consequently
\begin{equation*}
  \frac{\partial u}{\partial t}=0 \quad\text{at }(x^0,t^0);
\end{equation*}
if $t^0=T$, then $(x^0,t^0)\in\partial(D_T)$ and consequently
\begin{equation*}
  \frac{\partial u}{\partial t}\ge0 \quad\text{at }(x^0,t^0).
\end{equation*}
Thus we always have $-\frac{\partial u}{\partial t}+Lu\le0$ at point $(x^0,t^0)$, a contradiction to (\ref{strict}).

In the general case that $-\frac{\partial u}{\partial t}+Lu\ge0$ holds in $D_T$, define
\begin{equation}\label{ue_p}
  u^\epsilon(x,t):=u(x,t)-\epsilon t\quad\text{in }\overline{E_T},
\end{equation}
with a positive parameter $\epsilon$. Then
\begin{equation*}
  -\frac{\partial u^\epsilon}{\partial t}+Lu^\epsilon=-\frac{\partial u}{\partial t}+Lu+\epsilon>0,
\end{equation*}
and hence $\sup_{\overline{E_T}}u^\epsilon=\sup_{\overline{E_T}\setminus D_T}u^\epsilon$. Now Assertion 1 follows by setting $\epsilon\to0$.

Assertion 2. If $u$ is nonpositive throughout $D$, Assertion 2 is trivially true. Hence we may assume on the contrary that $u$ achieves a positive maximum at a point $(x^0,t^0)\in D_T$ over $\overline{E_T}$.

We   first consider the   case with  strict inequality $-\frac{\partial u}{\partial t}+Lu>0$ in $D_T$. Since $u(x^0,t^0)>0$ and $c\le0$, we derive the contradiction to Assertion 1,
\begin{equation*}
  -\frac{\partial u}{\partial t}+Lu\le0 \quad\text{at } (x^0,t^0).
\end{equation*}

More generally,  if $-\frac{\partial u}{\partial t}+Lu\ge0$ in $D_T$, then set as before $u^\epsilon(x,t):=u(x,t)-\epsilon t$ with $\epsilon>0$, which leads to
\begin{equation*}
  -\frac{\partial u^\epsilon}{\partial t}+Lu^\epsilon=-\frac{\partial u}{\partial t}+Lu+\epsilon-c\cdot\epsilon t\ge\epsilon-c\cdot\epsilon t>0.
\end{equation*}
Moreover, if $u$ achieves a positive maximum at a point $(x^0,t^0)\in D_T$ over $\overline{E_T}$, then by the continuity, $u^\epsilon$ also achieves a positive maximum at a point $(x^0,t^0)\in D_T$ over $\overline{E_T}$,  provided that $\epsilon$ is small enough. However, as in the previous proof, we obtain a contradiction.

This completes the proof.
\end{proof}

\begin{Remark}
   As in the first statement of Remark \ref{self}, we also conclude that in Assertion 1 of Theorem \ref{WMPp} if strictly $-\frac{\partial u}{\partial t}+Lu>0$ in $D$, then $u$ can either achieve its (finite) maximum only on $\overline{E_T}\setminus D_T$ or be unbounded on $\overline{E_T}$.
\end{Remark}

\begin{Remark}
  We cannot prove Assertion 2 of Theorem \ref{WMPp} in the same way as the corresponding assertion in Theorem \ref{WMPe}. In fact, if we introduce  similarly the set $D_T^+:=\{(x,t)\in D_T\mid u(x,t)>0\}$, it will never be the form of $U\times(0,T]$ for some $U\subset D$. Hence we may not take advantage of the first assertion of Theorem \ref{WMPp}. Consequently, the similar judgment with Assertion 2 of Remark \ref{self}, which lies on the proof of Assertion 2 in Theorem \ref{WMPe}, cannot be established here.
\end{Remark}

\begin{Remark}\label{natural}
  From Theorem \ref{WMPe} and Remark \ref{WMPde}, we have already known that, for $u\in C^{2,2}(D_T)\cap C(\overline{E_T})$, the supremum (or respectively, positive supremum) is achieved on $\overline{E_T}\setminus(D_T)^\circ$. The alert reader   could  notice that we may have appeared to be cheating here, as we should also verify that the kernel $\nu$ still satisfy the third assumption in the definition of elliptic Waldenfels operator (\ref{ewo}) when regarding it as a kernel in $\R^{d+1}$. In fact,     the  modified kernel   $\hat{\nu}((x,t),dzds):=\nu(t,x,dz)\delta_0(ds)$  does satisfy the moment condition (\ref{LM}), which is enough for us even though $\hat{\nu}$ is not supported inside $\R^{d+1}\setminus\{0\}$. See the proof of Lemma \ref{step1} for details.

  Moreover, if $-\frac{\partial u}{\partial t}+Lu>0$ in $D_T$, we see that the maximum (or respectively, positive maximum) cannot be achieved on the upper boundary $D\times\{T\}$ by the same argument as in the proof of Theorem \ref{WMPp}. Hence, it is     clear that Theorem \ref{WMPp} holds for $u\in C^{2,2}(D_T)\cap C(\overline{E_T})$,  which is a natural consequence of Theorem \ref{WMPe}.

  However, the result for $u\in C^{2,1}(D_T)\cap C(\overline{E_T})$ cannot be obtained in this way. We only need the first-order differentiability in $t$, benefiting from the form of the operator $-\frac{\partial}{\partial t}+L$. This is evident    in the different forms of $u^\epsilon$ in (\ref{ue_e}) and (\ref{ue_p}).
\end{Remark}

\begin{Remark}
There are two special cases for the weak maximum principle Theorem \ref{WMPp} for the parabolic operator $-\frac{\partial}{\partial t}+L$. That is, $E=\R^d$ or $E=D$. Take the latter as an example. Let $u\in C^{2,1}(D_T)\cap C(\overline{D_T})$, $-\frac{\partial u}{\partial t}+Lu\ge0$ in $D_T$, and $\text{\upshape supp}\,\nu(t,x,\cdot)\subset\overline D-x$ for each $(x,t)\in D_T$, where $D$ is open and bounded but not necessarily connected.
  \begin{enumerate}
    \item If $c\equiv0$ in $D_T$, then $$\max_{\overline{D_T}}u=\max_{\Gamma_T}u.$$
    \item If $c\le0$ in $D_T$, then $$\max_{\overline{D_T}}u\le\max_{\Gamma_T}u^+.$$
  \end{enumerate}
Here $\Gamma_T$ is     the parabolic boundary of $D_T$, i.e., $\Gamma_T:=\overline{D_T}\setminus D_T=(\partial D\times[0,T])\cup(D\times\{0\})$.
\end{Remark}

There are   some consequences of the weak maximum principle for a parabolic Waldenfels operator. We only highlight  the following results.

\begin{Corollary}\label{p<=}
  Let $D$ be an open and bounded set but not necessarily connected, and $E$ be an open set satisfying $D\subset E$.  Assume that $u\in C^{2,1}(D_T)\cap C(\overline{E_T})$, and $\text{\upshape supp}\,\nu(t,x,\cdot)\subset\overline E-x$ for each $(x,t)\in D_T$.
  \begin{enumerate}
    \item If $c\equiv0$ and $-\frac{\partial u}{\partial t}+Lu\le0$ both hold in $D_T$, then $$\inf_{\overline{E_T}}u=\inf_{\overline{E_T}\setminus D_T}u.$$
    \item If $c\le0$ and $-\frac{\partial u}{\partial t}+Lu\le0$ both hold in $D_T$, then
        $$\inf_{\overline{E_T}}u\ge-\sup_{\overline{E_T}\setminus D_T}u^-.$$
    \item If $c\le0$ and $-\frac{\partial u}{\partial t}+Lu=0$ both hold in $D_T$, then $$\sup_{\overline{E_T}}|u|=\sup_{\overline{E_T}\setminus D_T}|u|.$$
  \end{enumerate}
  Here the supremum and infimum may be infinity.
\end{Corollary}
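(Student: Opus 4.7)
The plan is to deduce all three assertions from Theorem \ref{WMPp} by the substitution $u \mapsto -u$, exactly in the spirit used for the elliptic analog Corollary \ref{L<=}. Linearity of $L$ and of $\frac{\partial}{\partial t}$ gives $-\frac{\partial(-u)}{\partial t}+L(-u)=-\bigl(-\frac{\partial u}{\partial t}+Lu\bigr)$, so each hypothesis on $-\frac{\partial u}{\partial t}+Lu$ flips sign cleanly. All remaining ingredients (the regularity $u\in C^{2,1}(D_T)\cap C(\overline{E_T})$, the sign condition on $c$, and the support condition on $\nu$) are invariant under $u\mapsto -u$, so Theorem \ref{WMPp} applies to $-u$ verbatim.

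For Assertion 1, I would apply Assertion 1 of Theorem \ref{WMPp} to $-u$: the hypothesis $-\frac{\partial u}{\partial t}+Lu\le 0$ becomes $-\frac{\partial(-u)}{\partial t}+L(-u)\ge 0$, giving $\sup_{\overline{E_T}}(-u)=\sup_{\overline{E_T}\setminus D_T}(-u)$, which rewrites as $\inf_{\overline{E_T}}u=\inf_{\overline{E_T}\setminus D_T}u$. For Assertion 2, Assertion 2 of Theorem \ref{WMPp} applied to $-u$ produces $\sup_{\overline{E_T}}(-u)\le \sup_{\overline{E_T}\setminus D_T}(-u)^+ =\sup_{\overline{E_T}\setminus D_T}u^-$; negating both sides gives the claimed lower bound on $\inf_{\overline{E_T}}u$.

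For Assertion 3, the equation $-\frac{\partial u}{\partial t}+Lu=0$ ensures that \emph{both} $u$ and $-u$ satisfy the hypothesis of Assertion 2 of Theorem \ref{WMPp}. Applying it to $u$ yields $\sup_{\overline{E_T}}u\le \sup_{\overline{E_T}\setminus D_T}u^+$, and since $\sup_{\overline{E_T}\setminus D_T}u^+\ge 0$ while $\sup_{\overline{E_T}}u^+ = (\sup_{\overline{E_T}}u)\lor 0$, this upgrades to the equality $\sup_{\overline{E_T}}u^+ = \sup_{\overline{E_T}\setminus D_T}u^+$. The symmetric argument applied to $-u$ gives $\sup_{\overline{E_T}}u^- = \sup_{\overline{E_T}\setminus D_T}u^-$. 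Combining with $|u|=u^+\lor u^-$ yields $\sup_{\overline{E_T}}|u| = \sup_{\overline{E_T}\setminus D_T}|u|$.

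I do not foresee any real obstacle here; the content is essentially bookkeeping. The one pitfall worth flagging is that, as noted in the remarks after Theorem \ref{WMPp}, one cannot imitate the elliptic-case trick of restricting to $\{u>0\}$ because the corresponding subset of $D_T$ need not be a time cylinder of the form $U\times(0,T]$; the substitution $u\mapsto -u$ sidesteps this issue, which is why the parabolic corollary parallels Corollary \ref{L<=} despite that technical asymmetry between the two settings.
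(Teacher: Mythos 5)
Your proof is correct, and for Assertions 1 and 2 it is exactly the intended argument (the paper states this corollary without proof, but the evident route is the parabolic parallel of Corollary \ref{L<=}: apply Theorem \ref{WMPp} to $-u$). For Assertion 3 your route is slightly, and usefully, different from the elliptic template: the proof of Corollary \ref{L<=} invokes Statement 2 of Remark \ref{self}, i.e.\ $\sup_{\overline{E}}u^+=\sup_{\overline{E}\setminus D}u^+$, which there is obtained by restricting to the set $D_+=\{u>0\}$ and reapplying Assertion 1 --- a device the paper itself points out is unavailable in the parabolic setting, since $D_T^+$ need not be a cylinder $U\times(0,T]$. You sidestep this by deriving the same identity $\sup_{\overline{E_T}}u^{\pm}=\sup_{\overline{E_T}\setminus D_T}u^{\pm}$ directly from Assertion 2 of Theorem \ref{WMPp} via the elementary observations that $\sup u^+=(\sup u)\lor 0$, that $\sup_{\overline{E_T}\setminus D_T}u^+\ge 0$ (the set is nonempty, containing $E\times\{0\}$), and that the reverse inequality is trivial by set inclusion; combining with $|u|=u^+\lor u^-$ pointwise then gives the claim. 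This is a clean and complete justification of the one assertion whose elliptic proof does not transfer verbatim, so the proposal stands as written.
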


\begin{Corollary}
 Let $D$ be an open and bounded set but not necessarily connected, and $E$ be an open set satisfying $D\subset E$.  Assume that $u,v\in C^{2,1}(D_T)\cap C(\overline{E_T})$, $c\le0$ in $D_T$, and $\text{\upshape supp}\,\nu(t,$ $x,\cdot)\subset\overline E-x$ for each $(x,t)\in D_T$. There is no sign condition on $c$.
  \begin{enumerate}
    \item (Comparison Principle) If $-\frac{\partial u}{\partial t}+Lu\le-\frac{\partial v}{\partial t}+Lv$ in $D_T$ and $u\ge v$ on $\overline{E_T}\setminus D_T$, then $u\ge v$ in $\overline{E_T}$.
    \item (Uniqueness) If $-\frac{\partial u}{\partial t}+Lu=-\frac{\partial v}{\partial t}+Lv$ in $D_T$ and $u=v$ on $\overline{E_T}\setminus D_T$, then $u=v$ in $\overline{E_T}$.
  \end{enumerate}
\end{Corollary}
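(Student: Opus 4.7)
The plan is to derive both statements directly from Corollary \ref{p<=} by applying it to the difference $w := u - v$, which is exactly the strategy used for the analogous elliptic corollary earlier in the paper. By linearity of $-\frac{\partial}{\partial t}+L$, the function $w$ lies in $C^{2,1}(D_T)\cap C(\overline{E_T})$ and inherits the support condition on $\nu$ verbatim, so Corollary \ref{p<=} is applicable to $w$.

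For the comparison principle, I would compute
\begin{equation*}
\Bigl(-\frac{\partial}{\partial t} + L\Bigr) w = \Bigl(-\frac{\partial u}{\partial t} + Lu\Bigr) - \Bigl(-\frac{\partial v}{\partial t} + Lv\Bigr) \le 0 \quad \text{in } D_T,
\end{equation*}
while the boundary hypothesis $u \ge v$ on $\overline{E_T}\setminus D_T$ gives $w^- \equiv 0$ on $\overline{E_T}\setminus D_T$. Under the standing sign assumption $c\le 0$ in $D_T$, Assertion 2 of Corollary \ref{p<=} then yields
\begin{equation*}
\inf_{\overline{E_T}} w \;\ge\; -\sup_{\overline{E_T}\setminus D_T} w^- \;=\; 0,
\end{equation*}
i.e.\ $u \ge v$ throughout $\overline{E_T}$, as required.

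For the uniqueness statement I see two equally short routes. The cleanest is to apply the comparison principle just established to the pairs $(u,v)$ and $(v,u)$ in turn, concluding $u\ge v$ and $v\ge u$ on $\overline{E_T}$. Alternatively one can invoke Assertion 3 of Corollary \ref{p<=} directly to $w = u-v$: the equality of the two parabolic expressions gives $(-\partial_t + L)w = 0$ in $D_T$, the boundary hypothesis gives $w \equiv 0$ on $\overline{E_T}\setminus D_T$, and the assertion forces $\sup_{\overline{E_T}}|w| = 0$. There is no real obstacle in this proof; the entire content is packaged in Theorem \ref{WMPp} and Corollary \ref{p<=}, and the only verification needed is that $w$ inherits the regularity, the sign condition, and the support hypothesis from $u$, $v$, which is immediate from linearity.
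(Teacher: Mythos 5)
Your argument for the case $c\le 0$ is correct and coincides with the paper's own first step: the paper likewise disposes of that case in one line by applying Corollary \ref{p<=} to $w=u-v$. However, the corollary's statement explicitly adds ``There is no sign condition on $c$,'' and the entire substantive content of the paper's proof is the removal of that sign hypothesis. Your proposal never addresses this, so relative to what the corollary is actually asserting there is a genuine gap: you have only proved the trivial half.

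The missing idea is the exponential-in-time substitution. To prove that $-\frac{\partial u}{\partial t}+Lu\le0$ in $D_T$ together with $u\ge0$ on $\overline{E_T}\setminus D_T$ forces $u\ge0$ in $\overline{E_T}$ \emph{without} assuming $c\le0$, the paper sets $u^\beta:=ue^{-\beta t}$ and computes
\begin{equation*}
-\frac{\partial u}{\partial t}+Lu=e^{\beta t}\Big(-\frac{\partial u^\beta}{\partial t}+Lu^\beta-\beta u^\beta\Big),
\end{equation*}
so that the hypothesis becomes $-\frac{\partial u^\beta}{\partial t}+(L-\beta)u^\beta\le0$. Since $c\in C(\overline{E_T})$ and $D$ is bounded, $c$ is bounded above on $\overline{D_T}$, and one may choose $\beta$ so large that the zeroth-order coefficient $c-\beta$ of $L-\beta$ is nonpositive in $D_T$; the already-established $c\le0$ case applied to $u^\beta$ then gives $u^\beta\ge0$, hence $u\ge0$. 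Note that this trick is specific to the parabolic structure (the time derivative absorbs the factor $\beta e^{\beta t}$), which is why no analogous ``no sign condition'' clause appears in the elliptic corollary you are modelling your proof on. You should either supply this substitution argument or restrict your claim to the $c\le0$ case.
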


\begin{proof}
  In the case that $c\le0$ in $D_T$, the two conclusions are trivially followed by applying Corollary \ref{p<=} to $u-v$.

  For general case without any assumption on the sign of $c$, we only need to prove that if $-\frac{\partial u}{\partial t}+Lu\le0$ in $D_T$ and $u\ge0$ on $\overline{E_T}\setminus D_T$, then $u\ge0$ in $\overline{E_T}$. Define $u^\beta:=ue^{-\beta t}$. Then $u\ge0$ is equivalent to $u^\beta\ge0$. We calculate
  \begin{equation*}
    -\frac{\partial u}{\partial t}+Lu=e^{\beta t}\Big(-\frac{\partial u^\beta}{\partial t}+Lu^\beta-\beta u^\beta\Big).
  \end{equation*}
  Hence $-\frac{\partial u}{\partial t}+Lu\le0$ is equivalent to $-\frac{\partial u^\beta}{\partial t}+(L-\beta)u^\beta\le0$. Choose a sufficiently large $\beta$, we can   ensure  $c-\beta$, the zeroth-order coefficient of operator $L-\beta$, to be nonpositive in $D_T$. By the preceding statements, we know $u^\beta\ge0$ in $\overline{E_T}$, equivalently, $u\ge0$ in $\overline{E_T}$.
\end{proof}

\begin{Corollary}
  Let $D$ be an open and bounded set but not necessarily connected, and $E$ be an open set satisfying $D\subset E$. Assume that $u,v\in C^{2,1}(D_T)\cap C(\overline{E_T})$, $-\frac{\partial u}{\partial t}+Lu=0$ in $D_T$, and $\text{\upshape supp}\,\nu(t,x,\cdot)\subset\overline E-x$ for each $(x,t)\in D_T$. If $\max_{\overline{D_T}} c\le\beta<0$, then
  \begin{equation*}
      \max_{\overline{E_T}}|u|\le e^{\beta T}\max_{\overline{E_T}\setminus D_T}|u|.
    \end{equation*}
\end{Corollary}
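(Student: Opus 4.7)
The plan is to reduce the claim to part 3 of Corollary \ref{p<=} by means of an integrating-factor substitution that shifts the zeroth-order coefficient of the operator below zero. Set $w(x,t) := e^{-\beta t} u(x,t)$. Since $e^{-\beta t}$ depends only on $t$, it commutes with both the differential part $A$ and the integral part $K$ of $L$ (the factor simply pulls out of the $x$-derivatives and out of the integral against $\nu(t,x,dz)$), so that $Lw = e^{-\beta t}\,Lu$.

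First I would compute $-\partial_t w + Lw$ directly. Using $Lw = e^{-\beta t}\,Lu$ together with $\partial_t w = e^{-\beta t}\partial_t u - \beta\,e^{-\beta t}u$, the hypothesis $-\partial_t u + Lu = 0$ yields
\begin{equation*}
  -\frac{\partial w}{\partial t} + Lw \;=\; e^{-\beta t}\Big(-\frac{\partial u}{\partial t} + Lu\Big) + \beta\,e^{-\beta t} u \;=\; \beta w,
\end{equation*}
so that $-\partial_t w + \tilde L\, w = 0$ in $D_T$, where $\tilde L := L-\beta$ is again a parabolic Waldenfels operator of the form (\ref{WoperatorTime}), with the same $a_{jk}, b_j$ and the same kernel $\nu$, but with zeroth-order coefficient $\tilde c := c-\beta$. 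By the hypothesis $\max_{\overline{D_T}} c \le \beta$, we have $\tilde c \le 0$ on $\overline{D_T}$, and the support condition (\ref{supportNu}) is inherited by $\tilde L$ verbatim from $L$.

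Next I would apply part 3 of Corollary \ref{p<=} to $w$ with respect to $\tilde L$, obtaining
\begin{equation*}
  \sup_{\overline{E_T}}|w| \;=\; \sup_{\overline{E_T}\setminus D_T}|w|.
\end{equation*}
The final step is bookkeeping: since $|w(x,t)| = e^{-\beta t}|u(x,t)|$ and $e^{-\beta t}$ is positive and monotone on $[0,T]$ (as $\beta<0$), bounding $e^{-\beta t}$ on each side by its extremal value on $[0,T]$ converts the equality for $|w|$ into the asserted inequality between $\max_{\overline{E_T}}|u|$ and $\max_{\overline{E_T}\setminus D_T}|u|$ with the exponential factor. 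I do not anticipate any serious obstacle: the argument is a standard integrating-factor reduction, and the only point requiring care is the verification that $\tilde L$ still meets the hypotheses of Corollary \ref{p<=}—which it does, since the substitution affects only the zeroth-order term and leaves the spatial coefficients and the L\'evy kernel untouched.
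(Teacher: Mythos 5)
Your reduction is exactly the one the paper uses: set $w:=e^{-\beta t}u$, verify $-\partial_t w+(L-\beta)w=0$ with new zeroth-order coefficient $c-\beta\le0$ and unchanged kernel, and invoke Assertion 3 of Corollary \ref{p<=} to get $\sup_{\overline{E_T}}|w|=\sup_{\overline{E_T}\setminus D_T}|w|$. All of that is correct. The gap is the final ``bookkeeping'' step, which you leave unexecuted and which in fact cannot be executed. Since $\beta<0$, the weight $e^{-\beta t}$ ranges over $[1,e^{-\beta T}]$ for $t\in[0,T]$, so the sharpest conversion available from the identity for $|w|$ is
\begin{equation*}
  \max_{\overline{E_T}}|u|\;\le\;\max_{\overline{E_T}}|w|\;=\;\max_{\overline{E_T}\setminus D_T}|w|\;\le\;e^{-\beta T}\max_{\overline{E_T}\setminus D_T}|u|,
\end{equation*}
i.e.\ the factor you can extract is $e^{-\beta T}>1$, not the asserted $e^{\beta T}<1$. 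To reach the stated bound you would need $\max_{\overline{E_T}}|w|\ge e^{-\beta T}\max_{\overline{E_T}}|u|$, which fails whenever $|u|$ peaks near $t=0$. Indeed the inequality as printed is false: take $\nu\equiv0$, $b=0$, $c\equiv\beta$ and $u(x,t)=e^{\beta t}$; then $-\partial_t u+Lu=0$ in $D_T$, yet $\max_{\overline{E_T}}|u|=\max_{\overline{E_T}\setminus D_T}|u|=1>e^{\beta T}$.

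In fairness, you have faithfully reproduced the paper's own argument, defect included: the paper's displayed chain asserts $e^{-\beta T}\max_{\overline{E_T}}|u|\le\max_{\overline{E_T}}|u^\beta|$ and $\max_{\overline{E_T}\setminus D_T}|u^\beta|\le\max_{\overline{E_T}\setminus D_T}|u|$, and both of these reverse the true inequalities for the same reason ($e^{-\beta t}\ge1$ on $[0,T]$). What the substitution genuinely yields is the bound with $e^{-\beta T}$ in place of $e^{\beta T}$, which is weaker than the factor $1$ already supplied by Corollary \ref{p<=} applied directly to $u$ (note $c\le\beta<0$). A correct exponential-decay statement of the intended type must weight the boundary data, e.g.\ $\max_{\overline E\times\{T\}}|u|\le e^{\beta T}\max_{\overline E\times\{0\}}|u|$ when $u$ vanishes on $(\overline E\setminus D)\times[0,T]$; that version does follow from $\max_{\overline{E_T}}|w|=\max_{\overline{E_T}\setminus D_T}|w|$, and if you want a salvageable conclusion you should prove that instead.
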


\begin{proof}
  We consider the function $u^\beta:=ue^{-\beta t}$ like in the previous corollary. By the same argument we know that $-\frac{\partial u}{\partial t}+Lu=0$ is equivalent to $-\frac{\partial u^\beta}{\partial t}+(L-\beta)u^\beta=0$.  The zeroth-order coefficient of operator $L-\beta$, i.e., $c-\beta$,  is nonpositive in $D_T$. Therefore, Assertion 3 of Corollary \ref{p<=} implies that
  \begin{equation*}
    e^{-\beta T}\max_{\overline{E_T}}|u|\le\max_{\overline{E_T}}|u^\beta|= \max_{\overline{E_T}\setminus D_T}|u^\beta|\le\max_{\overline{E_T}\setminus D_T}|u|.
  \end{equation*}
  Our result follows.
\end{proof}

\subsection{Strong maximum principle for parabolic case}

We now turn to the strong maximum principle for the parabolic Waldenfels operator $-\frac{\partial}{\partial t}+L$.
\begin{Theorem}[Strong maximum principle for parabolic Waldenfels operators]\label{SMPp}
Let $D$ be an open and connected set but not necessarily bounded,  and $E$ be an open set  satisfying $D\subset E$.  Assume that $u\in C^{2,2}(D_T)\cap C(\overline{E_T})$, $-\frac{\partial u}{\partial t}+Lu\ge0$ in $D_T$, and $\text{\upshape supp}\,\nu(t,x,\cdot)\subset\overline E-x$ for each $(x,t)\in D_T$. Moreover, assume that the mapping $(x,t)\to\nu(t,x,\cdot)$ is continuous in $D_T$. If one of the following conditions holds:
  \begin{enumerate}
    \item $c\equiv0$ in $D_T$ and $u$ achieves a (finite) maximum over $\overline{E_T}$ at a point $(x^0,t^0)\in D_T$;
    \item $c\le0$ in $D_T$ and $u$ achieves a (finite) nonnegative maximum over $\overline{E_T}$ at a point $(x^0,t^0)\in D_T$;
    \item $u$ achieves a zero maximum over $\overline{E_T}$ at a point $(x^0,t^0)\in D_T$,
  \end{enumerate}
  then $u$ is constant on $\overline{D_{t^0}}$, where $D_{t^0}=D\times(0,t^0]$.
\end{Theorem}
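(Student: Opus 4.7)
The strategy is to adapt the proof of the elliptic strong maximum principle (Theorem \ref{smpe}) to the spacetime setting, carrying out two separate propagation steps: first a spatial propagation showing $u(\cdot,t^0)\equiv M$ on the slice $D\times\{t^0\}$, then a backward-in-time propagation extending the identity $u\equiv M$ to all of $\overline{D_{t^0}}$. As a preliminary, the three listed hypotheses can be consolidated exactly as in the elliptic case into the single scenario $c\le 0$ in $D_T$ and $M:=u(x^0,t^0)\ge 0$, by shifting $u$ by a constant (in case 1) and by replacing $L$ with $L-c^+$ (in case 3). The main argument then proceeds by contradiction, assuming $u\not\equiv M$ somewhere on $\overline{D_{t^0}}$.

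At the maximizer $(x^0,t^0)$ one has $\nabla_x u=0$, $\nabla_x^2 u\le 0$, and $\partial_t u\ge 0$ (with equality when $t^0<T$); combined with $Au\le cu\le 0$, $Ku\le 0$, and the hypothesis $-\partial_t u+Lu\ge 0$, all these inequalities must collapse to equalities. In particular $Ku(x^0,t^0)=0$, which confines $\nu(t^0,x^0,\cdot)$ to the set $\{z:u(x^0+z,t^0)=M\}-x^0$, exactly as in Step 1 of the proof of Theorem \ref{smpe}. To promote this to $u(\cdot,t^0)\equiv M$ on the whole connected set $D$, I would suppose otherwise: by connectedness of $D$, pick a maximal ball $B(\tilde x,R)\subset\{x\in D:u(x,t^0)<M\}$ with a touching point $x^{00}\in\partial B(\tilde x,R)\cap D$ where $u(x^{00},t^0)=M$, and use the time-independent barrier $v(x)=e^{-\beta|x-\tilde x|^2}-e^{-\beta R^2}$. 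Since $-\partial_t v+Lv=Lv$, the estimates of Step 2 of Theorem \ref{smpe} together with the continuity of $(x,t)\to\nu(t,x,\cdot)$ yield $-\partial_t v+Lv>0$ on a parabolic sub-cylinder $D_0\times[t^0-\delta,t^0]$. Applying the weak parabolic maximum principle (Theorem \ref{WMPp}) to $u^\epsilon:=u+\epsilon v-M$ on this sub-cylinder gives the Hopf-type inequality $\partial u/\partial\mathbf n(x^{00},t^0)>0$, contradicting $\nabla_x u(x^{00},t^0)=0$.

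For the backward time propagation, suppose for contradiction there is $(x^*,t^*)\in D\times(0,t^0)$ with $u(x^*,t^*)<M$. Pick a ball $\overline{B(\bar x,\rho)}\subset D$ with $\bar x$ near $x^*$, and on the cylinder $\Omega=B(\bar x,\rho)\times[t^*,t^0]$ construct a parabolic barrier of the form $w(x,t)=(t^0-t)\bigl(e^{-\beta|x-\bar x|^2}-e^{-\beta\rho^2}\bigr)$. With $\beta$ sufficiently large and $\rho$ small, one verifies that $w\ge 0$ on $\overline\Omega$ with $w(x^*,t^*)>0$, while $-\partial_t w+Lw\ge 0$ holds in $\Omega$ because the $-\partial_t w$ contribution (positive and of order $1$ near $|x-\bar x|\approx 0$) absorbs both the jump term $Kw$ and the diffusion and zeroth-order contributions. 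Using the slice result $u(\cdot,t^0)\equiv M$ on $D$, one obtains $u+\epsilon w\le M$ on the parabolic boundary of $\Omega$, and Theorem \ref{WMPp} applied to $u+\epsilon w-M$ then extends this to the interior of $\Omega$, contradicting $u(x^*,t^*)+\epsilon w(x^*,t^*)>u(x^*,t^*)$ for suitable $\epsilon$.

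The principal obstacle is the backward-time barrier. In the spatial step, the time-independent barrier $v$ reduces the analysis to estimates already established in the proof of Theorem \ref{smpe}, but in the backward-time step $w$ must genuinely depend on $t$, and $-\partial_t w+Lw$ must retain the correct sign on the entire cylinder $\Omega$ rather than just at the corner $(x^{00},t^0)$. Fortunately, because $\nu(t,x,dz)$ encodes only spatial jumps, equivalently the lifted measure $\hat\nu((x,t),dz\,ds)=\nu(t,x,dz)\delta_0(ds)$ is Dirac in $s$ (as observed in Remark \ref{natural}), the nonlocal contribution $Kw(x,t)$ depends only on the time-$t$ slice of $w$; this reduces $Kw$ to the same elliptic estimates appearing in Step 2 of Theorem \ref{smpe} and makes the parameter tuning for $\beta$, $\rho$ feasible, but still requires care because jumps may leave $B(\bar x,\rho)$ and must be controlled using the support condition on $\nu$ and the already-established slice identity at $t=t^0$.
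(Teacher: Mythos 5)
Your high-level skeleton (reduce to the case $c\le0$, $M\ge0$; propagate the maximum spatially on the slice $t=t^0$; then propagate backward in time) matches the paper's strategy, but both propagation steps as you have set them up contain genuine gaps. In the spatial step, you apply the weak parabolic maximum principle to $u^\epsilon=u+\epsilon v-M$ on the sub-cylinder $D_0\times[t^0-\delta,t^0]$ with a \emph{time-independent} barrier $v$. For that you must verify $u^\epsilon\le0$ on the complement of the cylinder, and in particular on the bottom face $\overline{D_0}\times\{t^0-\delta\}$, where $v>0$ but you have no information whatsoever about $u$ at times $t<t^0$: the maximum $M$ may well be attained there. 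This is precisely the obstruction that forces the paper to argue differently: it first proves a spacetime-ball lemma (Lemma \ref{step1}: if $u<M$ throughout an open ball $B\subset\mathbb{R}^{d+1}$ with $\overline B\subset D_T$ and $u=M$ at one boundary point, that point must be a pole of $B$ -- here the barrier works because $u<M$ on the whole spacetime ball by hypothesis), and then obtains the fixed-time propagation (Lemma \ref{lem1}) not by a Hopf argument at time $t^0$ but by the distance function $\rho(x)=\mathrm{dist}((x,t^0),\{u=M\})$ and a Pythagorean estimate showing $\rho$ would have to be a nonvanishing constant along a segment approaching a maximum point, a contradiction. Your fixed-time Hopf shortcut does not close.

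The backward-time step has a more basic logical flaw: after concluding $u+\epsilon w\le M$ in $\Omega$, the inequality $u(x^*,t^*)+\epsilon w(x^*,t^*)>u(x^*,t^*)$ is not a contradiction, because $u(x^*,t^*)<M$ leaves a definite gap that a small $\epsilon w$ cannot fill; you would need $\epsilon w(x^*,t^*)>M-u(x^*,t^*)$, which you cannot arrange. In addition, the bottom face $\overline{B(\bar x,\rho)}\times\{t^*\}$ of your cylinder may again contain points where $u=M$ and $w>0$, so the weak maximum principle cannot be invoked, and the inequality $-\partial_t w+Lw\ge0$ fails near the center of the ball unless $t^0-t^*=O(1/\beta)$, since $A$ applied to the Gaussian factor contributes $-2\beta\,\mathrm{tr}(\mathbf a)$ there while $-\partial_t w$ is only $O(1)$. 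The paper instead proves the contrapositive local statement (Lemma \ref{lem2}): \emph{assuming} $u<M$ on all of $D\times(t^0,t^1)$, a paraboloid-shaped barrier $v(x,t)=e^{-|x-x^1|^2-\beta(t-t^1)}-1$ supported where $|x-x^1|^2+\beta(t-t^1)<0$ yields $\partial_t u(x^1,t^1)\ge\epsilon\beta>0$ at a hypothetical maximum point $(x^1,t^1)$, contradicting $Lu(x^1,t^1)\le0$ and $-\partial_t u+Lu\ge0$; here the hypothesis supplies exactly the strict inequality on the region where the barrier is positive. The global conclusion is then assembled by a connectedness argument on the set $I$ of times $t$ with $u(\cdot,t)<M$ on $D$, showing $I$ must be of the form $(s,T]$ with $s\ge t^0$. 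Your proposal omits both this local-to-global step and the two lemmas that make the local steps rigorous.
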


  A result of strong maximum principle for viscosity solutions of certain \emph{nonlinear} nonlocal parabolic operators proved in \cite{Cio} required a ``nondegeneracy'' condition, which is crucial in that context. But our strong maximum principle for \emph{linear} nonlocal parabolic operator in Theorem \ref{SMPp} does not need this or any other conditions like this.

The converse case that $-\frac{\partial u}{\partial t}+Lu\le0$ in $D_T$ is immediate.

\begin{Corollary}\label{sp_<}
Let $D$ be an open and connected set but not necessarily bounded,  and $E$ be an open set satisfying $D\subset E$.   Assume that $u\in C^{2,2}(D_T)\cap C(\overline{E_T})$, $-\frac{\partial u}{\partial t}+Lu\le0$ in $D_T$, and $\text{\upshape supp}\,\nu(t,x,\cdot)\subset\overline E-x$ for each $(x,t)\in D_T$. Moreover, assume that the mapping $(x,t)\to\nu(t,x,\cdot)$ is continuous in $D_T$. If one of the following conditions holds:
  \begin{enumerate}
    \item $c\equiv0$ in $D_T$ and $u$ achieves a (finite) minimum over $\overline{E_T}$ at a point $(x^0,t^0)\in D_T$;
    \item $c\le0$ in $D_T$ and $u$ achieves a (finite) nonpositive minimum over $\overline{E_T}$ at a point $(x^0,t^0)\in D_T$;
    \item $u$ achieves a zero minimum over $\overline{E_T}$ at a point $(x^0,t^0)\in D_T$,
  \end{enumerate}
  then $u$ is constant on $\overline{D_{t^0}}$, where $D_{t^0}=D\times(0,t^0]$.
\end{Corollary}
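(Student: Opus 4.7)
The plan is to reduce Corollary~\ref{sp_<} directly to Theorem~\ref{SMPp} via the substitution $v := -u$. Since the operator $-\frac{\partial}{\partial t} + L$ with $L = A + K$ acts linearly on its argument, we have $-\frac{\partial v}{\partial t} + Lv = -\bigl(-\frac{\partial u}{\partial t} + Lu\bigr) \ge 0$ in $D_T$. Moreover $v \in C^{2,2}(D_T) \cap C(\overline{E_T})$, and the support condition $\text{\upshape supp}\,\nu(t,x,\cdot) \subset \overline{E} - x$ as well as the continuity of the map $(x,t) \mapsto \nu(t,x,\cdot)$ and of the coefficients $a_{jk}, b_j, c$ are all properties of the data that are untouched by replacing $u$ with $-u$.

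Next I would translate each minimum hypothesis on $u$ into the corresponding maximum hypothesis on $v$. Under Case~1, $c \equiv 0$ in $D_T$ and $u$ attains a finite minimum at $(x^0,t^0) \in D_T$ over $\overline{E_T}$, so $v = -u$ attains a finite maximum at the same point, placing us in Case~1 of Theorem~\ref{SMPp}. Under Case~2, $c \le 0$ and $u$ attains a finite nonpositive minimum, so $v$ attains a finite nonnegative maximum, matching Case~2 of Theorem~\ref{SMPp}. Under Case~3, $u$ attains a zero minimum, so $v$ attains a zero maximum, matching Case~3 of Theorem~\ref{SMPp}. In each situation, Theorem~\ref{SMPp} yields that $v$ is constant on $\overline{D_{t^0}}$, and therefore so is $u = -v$.

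There is no real obstacle here: the corollary is an immediate formal consequence of the linearity of $-\frac{\partial}{\partial t} + L$ in $u$ together with the sign reversal turning minima into maxima. The only bookkeeping worth stating explicitly in the write-up is the case-by-case matching of the three sign hypotheses on the extremum, which is exactly the pairing (finite minimum $\leftrightarrow$ finite maximum, finite nonpositive minimum $\leftrightarrow$ finite nonnegative maximum, zero minimum $\leftrightarrow$ zero maximum) indicated above.
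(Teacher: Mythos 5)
Your proof is correct and matches the paper's intended argument: the paper dispenses with this corollary by noting it is "immediate" from Theorem \ref{SMPp} applied to $-u$, exactly as you do, and your case-by-case matching of the sign hypotheses is accurate. No issues.
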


To prove the strong maximum principle, we  will consider the horizontal propagation of maximizer  point in space by the similar arguments in elliptic case,   and further obtain the vertical propagation of maximizer point locally in time by the weak maximum principle  in elliptic case.

Denote $M:=\max_{\overline{E_T}}u<\infty$ for convenience. Under the assumptions in Theorem \ref{SMPp}, that is, $u\in C^{2,2}(D_T)\cap C(\overline{E_T})$, $-\frac{\partial u}{\partial t}+Lu\ge0$ in $D_T$, and $u(x^0,t^0)=M$ with point $(x^0,t^0)\in D_T$,   $\text{\upshape supp}\,\nu(t,x,\cdot)\subset\overline E-x$ for each $(x,t)\in D_T$, and the mapping $(x,t)\to\nu(t,x,\cdot)$ is continuous in $D_T$.  Furthermore, one of the following   assumptions holds:
\begin{Assumption}\label{c1}
  $c\equiv0$ in $D_T$.
\end{Assumption}
\begin{Assumption}\label{c2}
  $c\le0$ in $D_T$ and $M\ge0$.
\end{Assumption}
\begin{Assumption}\label{c3}
  $M=0$.
\end{Assumption}
The following lemma follows from Theorem \ref{smpe} and Remark \ref{psd}.
\begin{Lemma}\label{step1}
  Let $B\subset\mathbb{R}^{d+1}$ be a open ball with $\overline{B}\subset D_T$. Assume that  there exists a point $(x^0,t^0)\in\partial B$ such that $u(x^0,t^0)=M$ and $u(x,t)<M$ for each point $(x,t)\in B$. Then $t^0$ is either the smallest or the largest value over all the time coordinates of points in $\overline B$.
\end{Lemma}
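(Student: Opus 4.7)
The plan is to view $-\frac{\partial}{\partial t}+L$ as a degenerate-elliptic Waldenfels operator on $\R^{d+1}$ and to run the three-step barrier argument from the proof of Theorem~\ref{smpe} (in the semidefinite refinement indicated in Remark~\ref{psd}) with the ball $B$ playing the role of the domain. The spacetime diffusion matrix $\hat{\mathbf a}:=\begin{pmatrix}\mathbf a&0\\0&0\end{pmatrix}$ is positive semidefinite with nullspace equal to the time axis, and the jump kernel $\hat\nu((x,t),dz\,ds):=\nu(t,x,dz)\delta_0(ds)$ is supported in $\overline{E_T}-(x,t)$ for $(x,t)\in D_T$. The conclusion of the Lemma translates exactly into the statement that the outer unit normal to $B$ at $(x^0,t^0)$ lies in the nullspace of $\hat{\mathbf a}$, i.e., is parallel to the time axis; I will derive a contradiction under the contrary assumption.

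First I dispose of the case $t^0=T$ trivially: since $\overline B\subset D\times(0,T]$, every point in $\overline B$ has time coordinate at most $T$, so $t^0=T$ already yields the conclusion. Hence I may assume $t^0<T$, so that $(x^0,t^0)\in D\times(0,T)=(D_T)^\circ$ is an interior spacetime point. Because $u(x^0,t^0)=M=\sup_{\overline{E_T}}u$, we have $\nabla_x u(x^0,t^0)=0$ and $\partial_t u(x^0,t^0)=0$, so every directional derivative of $u$ at $(x^0,t^0)$ vanishes.

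Arguing by contradiction, suppose $t^0$ is strictly between the smallest and largest time coordinates of $\overline B=\overline{B((x^1,t^1),R)}$; equivalently, $|x^0-x^1|>0$, and the outer normal $\mathbf n=\bigl(x^0-x^1,t^0-t^1\bigr)/R$ has nonzero spatial component. Introduce the spacetime barrier
\[
v(x,t):=\exp\bigl(-\beta[|x-x^1|^2+(t-t^1)^2]\bigr)-e^{-\beta R^2}
\]
and the lens $D_0:=B\cap B((x^0,t^0),r)$ for a small $r\in(0,R)$. A computation closely modelled on (\ref{Av1})--(\ref{I2}) gives, on $D_0$,
\[
\Bigl(-\tfrac{\partial}{\partial t}+L\Bigr)v\gtrsim 4\gamma\beta^2|x-x^1|^2\,e^{-\beta[|x-x^1|^2+(t-t^1)^2]}-C_1\beta-C_2,
\]
where the leading $\beta^2$ term is produced solely by the spatial second-order part $\mathrm{tr}[\mathbf a^T\nabla_x^2 v]$ (the degenerate time direction contributes nothing at this order), while $-\partial_t v$, $b^T\nabla_x v$, and $Kv$ are at most $O(\beta)$, the last estimated via Taylor expansion and (\ref{LMt}) exactly as in (\ref{I2}). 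It is precisely here that $|x^0-x^1|>0$ is indispensable. Choosing $\beta$ large and then $r$ small, continuity of $\mathbf a,b,c$ and of $(x,t)\mapsto\nu(t,x,\cdot)$ yields $(-\partial_t+L)v>0$ throughout $D_0$.

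Finally I set $u^\epsilon:=u+\epsilon v-M$ and verify that $(-\partial_t+L)u^\epsilon\ge\epsilon(-\partial_t+L)v-cM\ge 0$ on $D_0$ under each of Assumptions~\ref{c1}--\ref{c3} (the term $-cM$ is nonnegative in cases~1 and~2, and in case~3 one reduces to case~2 by the substitution $L\leadsto L-c^+$ employed in Theorem~\ref{smpe}). For sufficiently small $\epsilon>0$, the boundary control $u^\epsilon\le 0$ on $\overline{E_T}\setminus D_0$ is immediate: $v\le 0$ outside $B$, while $u-M$ is strictly negative on the compact set $\overline B\setminus B((x^0,t^0),r)$. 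The weak maximum principle Theorem~\ref{WMPe} (with the semidefinite extension of Remark~\ref{WMPde}), applied to $-\partial_t+L$ on $D_0\subset\R^{d+1}$, then forces $u^\epsilon\le 0$ throughout $\overline{E_T}$; since $u^\epsilon(x^0,t^0)=0$, the outer normal derivative satisfies $\frac{\partial u^\epsilon}{\partial\mathbf n}(x^0,t^0)\ge 0$, hence
\[
\frac{\partial u}{\partial\mathbf n}(x^0,t^0)\ge-\epsilon\frac{\partial v}{\partial\mathbf n}(x^0,t^0)=2\epsilon\beta R\,e^{-\beta R^2}>0,
\]
contradicting $\nabla_{(x,t)}u(x^0,t^0)=0$. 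The main obstacle is managing the degeneracy of $\hat{\mathbf a}$ along the time axis: the Hopf-type contradiction only materialises when the outer normal to $B$ is transverse to the time direction, which is exactly what fails at the top and bottom of $\overline B$, and explains why the lemma's dichotomy is sharp.
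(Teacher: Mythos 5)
Your proposal is correct and takes essentially the same route as the paper: both recast $-\frac{\partial}{\partial t}+L$ as a degenerate elliptic Waldenfels operator on $\R^{d+1}$ with block matrix $\hat{\mathbf a}=\begin{pmatrix}\mathbf a&0\\0&0\end{pmatrix}$ and kernel $\nu(t,x,dz)\delta_0(ds)$, and then invoke the Hopf-type barrier argument of Theorem \ref{smpe} in the positive-semidefinite form of Remark \ref{psd} to conclude that the outer normal at $(x^0,t^0)$ must lie in the nullspace of $\hat{\mathbf a}$, i.e., along the time axis. You simply unpack the barrier computation that the paper handles by reference to its Step 2--3 and Remark \ref{natural}.
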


\begin{proof}
  If $t^0=T$, the theorem is trivial. Hence we assume $t^0<T$.  Equivalently, $(x^0,t^0)$ is an interior point of $D_T$.

  We regard the parabolic Waldenfels operator $-\frac{\partial}{\partial t}+L$ as a degenerate elliptic Waldenfels operator by writing
  \begin{equation*}
    \begin{split}
         & \Big(-\frac{\partial}{\partial t}+L\Big)u(x,t) \\
        =& -\frac{\partial u}{\partial t}(x,t)+\text{tr}[\mathbf{a}^T(\nabla^2 u)](x,t)+b^T\nabla u(x,t)+cu(x,t) \\
         & +\int_{\R^d\setminus\{0\}}\big[u(x+z,t)-u(x,t)-z^T\nabla u(x,t)\cdot\mathbf{1}_{\{|z|<1\}}\big]\nu(t,x,dz) \\
         =& \ \text{tr}\bigg[\begin{pmatrix}\mathbf{a} & 0 \\0 & 0\end{pmatrix}^T(\nabla^2_{x,t}u)\bigg](x,t)+ (b^T,-1)\cdot\nabla_{x,t}u(x,t)+cu(x,t) \\
         & +\int_{\R^{d+1}}\big[u(x+z,t+s)-u(x,t) \\
         &\qquad\qquad -(z^T,s)\cdot \nabla_{x,t}u(x,t)\cdot\mathbf{1}_{\{|z|^2+|s|^2\leq 1\}}\big]\nu(t,x,dz)\delta_0(ds).
    \end{split}
  \end{equation*}
  Thus, we can replace the matrix $\mathbf{a}$ in the elliptic Waldenfels operator (\ref{ewo1}) by $\hat{\mathbf{a}}=\begin{pmatrix}\mathbf{a} & 0 \\0 & 0\end{pmatrix}$, vector $b$ by $\hat{b}=\begin{pmatrix}b\\-1\end{pmatrix}$, and  the kernel $\nu(x,dz)$ by $\hat{\nu}((x,t),dzds):=\nu(t,x,dz)\delta_0(ds)$.

  Now we verify that the kernel $\hat{\nu}$, defined on $\mathbb{R}^{d+1}$, satisfies the moment condition (\ref{LM}), although its support is not contained in  $\mathbb{R}^{d+1}\setminus\{\mathbf{0}\}$. By recalling (\ref{LMt}), condition (\ref{LM}) for $\hat{\nu}$ immediately follows as we see that
  \begin{equation}\label{lm}
    \begin{split}
        & \int_{\mathbb{R}^{d+1}}\big[1\land(|z|^2+|s|^2)\big]\hat{\nu} (x,t,dzds) \\
        =& \int_{\mathbb{R}^{d+1}}\big[1\land(|z|^2+|s|^2)\big] \nu(t,x,dz)\delta_0(ds) \\
         =& \int_{\mathbb{R}^d\setminus\{0\}} (1\land|z|^2)\nu(t,x,dz) \\
         <& \ \infty.
    \end{split}
  \end{equation}
  Since   $(x^0,t^0)$ is the maximizer  point over $\overline{E_T}$ of $u$, we may replace the set $E_0$ in (\ref{E0}) by
  \begin{equation*}
    \hat{E_0}:=\{(x,t)\in\overline{E}\times\{t^0\}\mid u(x,t)=M\}.
  \end{equation*}
   Due to the fact that the support of measure $\hat{\nu}((x^0,t^0),\cdot)$ is contained in $\overline{E}\times\{0\}$,  we  can further  derive (\ref{nu}) for $\hat{\nu}$ and $\hat{E_0}$. It turns out that (\ref{Kv}) and (\ref{I1}), (\ref{I2}) also hold in this situation.

  Combining Remark \ref{psd} and the   preceding arguments , we conclude that, as in Theorem \ref{smpe}, $\frac{\partial u}{\partial\mathbf{n}}>0$ holds in the case that the unit outer normal vector $\mathbf{n}$ of $(x^0,t^0)$ over $\partial B$ is not in the nullspace of $\mathbf{a}(x^0,t^0)$, which is exactly the space $N:=\{(x,t)\mid x=0\}$. But this leads to a contradiction: since $u$ attains a maximum at the interior point $(x^0,t^0)$, we have $\frac{\partial u}{\partial\mathbf{n}}=0$.
  Therefore, $(x^0,t^0)$ must be a pole of ball $B$, whose unit outer normal vector is just in $N$.
  This completes the proof.
\end{proof}

The next lemma shows that for every $t\in(0,T)$, we have either $u(x,t)<M$ or $u(x,t)=M$ for all $x\in D$.  This  means that the non-maximizer point (or   the maximizer point) may propagation horizontally in space. The proof can be found in \cite{Renardy}, we do not present it here and the main points of the proof can be found in Section \ref{appendix}.

\begin{Lemma}\label{lem1}
  Assume that $u(x^0,t^0)<M$ with $x^0\in D$ and $t^0\in(0,T)$. Then $u(x,t^0)<M$ for every $x\in D$.
\end{Lemma}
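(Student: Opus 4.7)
I would argue by contradiction, applying Lemma~\ref{step1} after a time-rescaling that collapses its ``extreme-time'' conclusion back onto the slice $\{t=t^0\}$. Suppose there exists $\bar x\in D$ with $u(\bar x,t^0)=M$. The spatial set $V:=\{x\in D:u(x,t^0)<M\}$ is open in $D$, nonempty (it contains $x^0$), and proper (it misses $\bar x$); since $D$ is connected, its boundary in $D$ is nonempty. I would pick $x^*\in V$ sufficiently close to this boundary that $r:=\mathrm{dist}(x^*,D\setminus V)$ is positive and small enough to guarantee $\overline{B(x^*,r)}\subset D$ and $r<\min(t^0,T-t^0)$. Then the open spatial ball $B(x^*,r)\subset V$ (so $u(\cdot,t^0)<M$ strictly there), while $D\setminus V$ contains points with spatial distance to $x^*$ arbitrarily close to $r$.

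\textbf{Time-rescaling and ball-growing.} For a large parameter $N>0$, set $\tilde u(x,\tau):=u(x,t^0+\tau/N)$, so that $\tilde u$ satisfies $-\partial_\tau\tilde u+(L/N)\tilde u\ge 0$ with the scaled operator $L/N$ still meeting the hypotheses needed for Lemma~\ref{step1} (uniform ellipticity with constant $\gamma/N$, the same continuity and support structure). A Euclidean ball $B_\rho:=B((x^*,0),\rho)\subset\R^{d+1}$ in the rescaled coordinates corresponds in the original coordinates to the thin space-time ellipsoid $\{|x-x^*|^2+N^2(t-t^0)^2<\rho^2\}$. Using compactness of $\overline{B(x^*,r/2)}\subset V$ together with uniform continuity of $u$, I would fix $N$ so large that $\tilde u<M$ on $\overline{B_{r/2}}$. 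Define $\rho^*_N$ as the largest $\rho\le r$ for which $\tilde u<M$ throughout $B_\rho$; by construction $r/2\le\rho^*_N\le r$ (the upper bound coming from the existence of $\bar x_\epsilon\in D\setminus V$ with $|\bar x_\epsilon-x^*|<r+\epsilon$, at which $\tilde u=M$ on the $\tau=0$ slice). A standard continuity argument produces a contact point $(y^*_N,\tau^*_N)\in\partial B_{\rho^*_N}$ with $\tilde u(y^*_N,\tau^*_N)=M$, and Lemma~\ref{step1} applied to $\tilde u$ forces $\tau^*_N$ to be extreme. Hence $(y^*_N,\tau^*_N)=(x^*,\pm\rho^*_N)$, which in original coordinates reads $u(x^*,t^0\pm\rho^*_N/N)=M$.

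\textbf{Contradiction and main obstacle.} Since $\rho^*_N\le r$ is bounded while $N\to\infty$, the time offset $\rho^*_N/N$ tends to zero. Passing to a subsequence with a fixed sign and using continuity of $u$ yields $u(x^*,t^0)=M$, contradicting $u(x^*,t^0)<M$; thus no such $\bar x$ can exist and $u(\cdot,t^0)<M$ throughout $D$. The crux of the argument is the time-rescaling itself: without it, the ball-growing in Lemma~\ref{step1} only places contacts at the time poles $(x^*,t^0\pm\rho^*)$ with offsets $\rho^*$ bounded below by a positive constant, yielding no information on the slice $\{t=t^0\}$. Compressing the ball in the time direction via the factor $1/N$ forces those poles toward $t^0$. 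The technical heart of the argument is the uniform lower bound $\rho^*_N\ge r/2$ (independent of $N$, so that $\rho^*_N/N$ genuinely vanishes), which is secured by the compactness/uniform-continuity step on the spatial ball $\overline{B(x^*,r/2)}$.
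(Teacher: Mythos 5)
Your argument is correct, but it reaches the conclusion by a genuinely different route from the paper's. Both proofs hinge on Lemma~\ref{step1} (a ball touching the level $\{u=M\}$ only at a time-pole), but the paper keeps the original time scale: it picks the point $x^1$ on the slice $\{t=t^0\}$ nearest to $x^0$ at which $u=M$, sets $\rho(x)=\mathrm{dist}\big((x,t^0),\{u=M\}\big)$ along the segment joining $x^0$ to $x^1$, and combines Lemma~\ref{step1} with the Pythagorean inequalities $\rho(x+\epsilon e)^2\le\rho(x)^2+\epsilon^2$ and $\rho(x)^2\le\rho(x+\epsilon e)^2+\epsilon^2$ to show that $\rho$ has vanishing derivative along the segment, hence is constant --- contradicting $\rho>0$ on the segment while $\rho(x)\to0$ as $x\to x^1$. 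You instead fix a single spatial point $x^*$ and compress the time direction by a factor $N$, so that the poles of the maximal touching ball are driven onto the slice as $N\to\infty$, and the contradiction comes from continuity of $u$ at $(x^*,t^0)$. Your route costs the (easy but necessary) verification that the rescaled operator $-\partial_\tau+L/N$ still meets every standing hypothesis of Lemma~\ref{step1}, which you do note; the paper's route avoids rescaling at the price of the slicker distance-function differentiation. One small correction to your own accounting: the ``technical heart'' is not the uniform lower bound $\rho^*_N\ge r/2$. For $\rho^*_N/N\to0$ only the upper bound $\rho^*_N\le r$ is needed, and for Lemma~\ref{step1} to be applicable one only needs $\rho^*_N>0$, which is automatic from $u(x^*,t^0)<M$ and continuity; the $r/2$ bound is harmless but superfluous.
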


\begin{Remark}
  In Lemma \ref{lem1}, we restrict the set $D$ to be connected to make sure that the point $(x^1,t^0)$ can be chosen. More generally, if $D$ is not connected, we may replace $D$ in the previous proof with the connected component of $D$ which contains the maximizer  point. We thus    conclude  that for every fixed $t\in(0,T)$,    either $u(x,t)<M$ or $u(x,t)=M$ holds in each connected component of $D$. Then as in Remark \ref{component}, we see that the diffusion term gives rise to the horizontal propagation of maximizer point in the corresponding connected component.

  As in Remark \ref{tran},  or from \cite{Cio,Cov}, we also see that  the horizontal propagation of maximizer point by translation of measure support. Namely, if $u(x^0,t^0)=M$ with $x^0\in D$ and $t^0\in(0,T)$, then $u\equiv M$ on the set $\overline{\bigcup_{n=0}^\infty\varLambda_n}$, where $\tilde\varLambda_n$'s are defined by induction,
  \begin{equation*}
    \tilde\varLambda_0 = {x^0}, \tilde\varLambda_{n+1}= \bigcup_{x\in D\cap\tilde\varLambda_n}[\text{\upshape supp}\,\nu(t^0,x,\cdot)+x].
  \end{equation*}
  Thus in this scheme, the jump diffusion term leads to the horizontal propagation of maximizer point between those connected components, since jumps from one connected component to another might occur when measure supports overlap two or more connected components.
\end{Remark}

Furthermore, we present the final lemma we need. It means the maximizer point may propagate vertically in time in a local sense. The proof can also be found in Section \ref{appendix}
\begin{Lemma}\label{lem2}
  Assume that $u<M$ in $D\times(t^0,t^1)$, with  $0\le t^0<t^1\le T$. Then $u<M$ in $D\times\{t^1\}$.
\end{Lemma}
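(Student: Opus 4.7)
The plan is to prove Lemma~\ref{lem2} by contradiction, using a Hopf-type barrier construction on a space-time cylinder combined with the weak parabolic maximum principle (Theorem~\ref{WMPp}). The construction parallels the elliptic Hopf barrier from Step~2 of the proof of Theorem~\ref{smpe}, augmented with a decay-in-$t$ factor so that the barrier is a strict subsolution for $-\partial_t + L$, and the final contradiction is driven by Lemma~\ref{lem1} at the top time slice.

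Suppose, toward a contradiction, that $u(x^*, t^1) = M$ for some $x^* \in D$. Since $x^*$ is interior, fix $R > 0$ with $\overline{B(x^*, R)} \subset D$ and $\tau \in (0, t^1 - t^0)$. The strict inequality $u < M$ on the compact slice $\overline{B(x^*, R)} \times \{t^1 - \tau\} \subset D \times (t^0, t^1)$ combined with continuity yields a uniform gap $u \le M - \delta_0$ there, for some $\delta_0 > 0$. I would then introduce the barrier
$$v(x, t) := \exp\bigl(-\alpha[\,|x - x^*|^2 + \beta(t^1 - t)\,]\bigr) - \exp(-\alpha R^2),$$
with $\alpha > 0$ large and $\beta > 0$ chosen so that $\beta\tau \ge R^2$. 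Then $v(x^*, t^1) = 1 - e^{-\alpha R^2} > 0$, while $v \le 0$ on the lateral face $\partial B(x^*, R) \times [t^1 - \tau, t^1]$ and on the bottom slice. Repeating verbatim the estimates~(\ref{Av1})--(\ref{I2}) from Step~2 of the proof of Theorem~\ref{smpe}, and absorbing the new contribution $-\partial_t v = -\alpha\beta\, e^{-\alpha[\cdots]} = O(\alpha)$ into the leading $O(\alpha^2)$ term $4\gamma\alpha^2|x-x^*|^2 e^{-\alpha[\cdots]}$ supplied by uniform ellipticity, one obtains the strict parabolic subsolution property $-\partial_t v + Lv > 0$ on the annular region $D_0 := \{R/2 < |x - x^*| < R\} \times (t^1 - \tau, t^1]$, provided $\alpha$ is chosen large enough.

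Set $w := u + \epsilon v$ with $\epsilon > 0$ small; linearity plus $-\partial_t u + Lu \ge 0$ gives $-\partial_t w + Lw > 0$ on $D_0$ (applying the standard substitution $u \mapsto u - M$ or $u \mapsto ue^{-\beta t}$ to reduce the zeroth-order coefficient to a non-positive one under Assumptions~2 and~3). By Theorem~\ref{WMPp}, $\sup_{\overline{D_0}} w = \sup_{\partial_p D_0} w$. On the outer lateral face $v \le 0$ and $u \le M$, so $w \le M$; on the bottom, $v \le 0$ and $u \le M - \delta_0$ give $w \le M$ for $\epsilon$ small. Granting $w \le M$ also on the inner lateral face, one concludes $w \le M$ on all of $\overline{D_0}$, and evaluating at any interior point $(x, t^1)$ of the annular top face $\{R/2 < |x - x^*| < R\} \times \{t^1\} \subset \overline{D_0}$ yields $u(x, t^1) \le M - \epsilon v(x, t^1) < M$. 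But by Lemma~\ref{lem1} applied at $t^1$ (assuming $t^1 \in (0, T)$; the boundary case $t^1 = T$ is handled by a limiting argument through times $t^1 - \epsilon' \nearrow T$), the hypothesis $u(x^*, t^1) = M$ forces $u(\cdot, t^1) \equiv M$ on $D$, contradicting the strict upper bound just obtained.

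The main obstacle is precisely the bound on the inner lateral face $\{|x - x^*| = R/2\} \times [t^1 - \tau, t^1]$, where $v$ is strictly positive while $u$ approaches $M$ as $t \to t^1$ (again by Lemma~\ref{lem1}), so no single choice of $\epsilon$ can absorb the degenerating gap near $t = t^1$. The standard remedy is to truncate the time variable in $D_0$ to $(t^1 - \tau, t^1 - \eta]$ for small $\eta > 0$, on which compactness furnishes a uniform gap $u \le M - \delta_\eta$ on the truncated inner face, apply Theorem~\ref{WMPp} with $\epsilon = \epsilon(\eta)$ chosen to respect this gap, and then carry out a coordinated limit in which $\eta \to 0^+$, $\alpha = \alpha(\eta) \to \infty$, and $\epsilon = \epsilon(\eta) \to 0^+$ at rates ensuring that the quantitative strict inequality at the top face survives. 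Carrying out this coordinated scaling — or, equivalently, refining the barrier so that it vanishes identically on the inner lateral face while remaining a strict subsolution on $D_0$ — is the technical heart of the proof.
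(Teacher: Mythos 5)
Your construction has a genuine gap, and it is exactly the one you flag at the end: the comparison on the inner lateral face $\{|x-x^{*}|=R/2\}\times[t^{1}-\tau,t^{1}]$ cannot be made to work. Since that face reaches all the way up to $t=t^{1}$, and Lemma \ref{lem1} forces $u(\cdot,t^{1})\equiv M$ on $D$ once $u(x^{*},t^{1})=M$, you have $u=M$ and $v>0$ simultaneously on part of the parabolic boundary of your annular cylinder, so $w=u+\epsilon v>M$ there for every $\epsilon>0$ and Theorem \ref{WMPp} yields nothing. The proposed repair (truncate to $t\le t^{1}-\eta$ and take a coordinated limit $\eta\to0^{+}$, $\epsilon(\eta)\to0^{+}$) degenerates: to respect the gap $\delta_{\eta}$ on the truncated inner face you must take $\epsilon(\eta)\lesssim\delta_{\eta}$, and since the rate at which $u\to M$ on that face as $t\to t^{1}$ is not controlled by any hypothesis, the conclusion $u(x,t^{1}-\eta)\le M-\epsilon(\eta)v(x,t^{1}-\eta)$ collapses to $u\le M$ in the limit. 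There is the further structural problem that your annulus excludes $x^{*}$ from its spatial cross-section, so even a successful barrier estimate on $D_0$ could never be evaluated at the contact point itself.

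The paper avoids both difficulties by changing the geometry and the nature of the contradiction. It takes $v(x,t)=e^{-|x-x^{1}|^{2}-\beta(t-t^{1})}-1$ (no large spatial parameter; the strict subsolution property $-\partial_t v+Lv\ge0$ comes entirely from $-\partial_t v=\beta e^{-\cdots}$ dominating the bounded contributions of $Av$ and $Kv$ for $\beta$ large) and works on the paraboloidal region $D_T^{\beta}=\{(x,t)\in\hat B\mid |x-x^{1}|^{2}+\beta(t-t^{1})<0\}$, which touches the slice $\{t=t^{1}\}$ only at the single apex $(x^{1},t^{1})$. The boundary comparison for $u^{\epsilon}=u+\epsilon v-M$ then only requires $u<M$ on a set compactly contained in $\{t<t^{1}\}$, where the hypothesis of the lemma supplies a uniform gap; the weak maximum principle (in the elliptic form of Theorem \ref{WMPe}, via Remark \ref{natural}) gives $u^{\epsilon}\le0$ with equality at the apex, whence $\partial_t u(x^{1},t^{1})\ge\epsilon\beta>0$. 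Combined with $Lu(x^{1},t^{1})\le0$ (as $(x^{1},t^{1})$ is a global maximizer) this contradicts $-\partial_t u+Lu\ge0$ directly, with no appeal to Lemma \ref{lem1} at the top slice. If you want to salvage your write-up, replace the annulus by such a paraboloid touching the level-$M$ set only at the apex and aim the contradiction at the sign of $\partial_t u$ there rather than at Lemma \ref{lem1}.
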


\vspace{3mm}
Finally we can prove Theorem \ref{SMPp}.

\begin{proof}[Proof of Theorem \ref{SMPp}]
Set $D_T^<:=\{(x,t)\in D_T\mid u(x,t)<M\}$. Then $D_T^<$ is a relatively open subset of $D_T$. From Lemma \ref{lem1}, we know that for each fixed $t\in(0,T)$,  either $u(x,t)<M$ or $u(x,t)=M$  holds for all $x\in D$. Therefore, $D_T^<$   must be   of the form $D\times I$,  for some $I\subset (0,T]$ relatively open in $(0,T]$.

For fixed $s\in I$ and $s\neq T$, define $\tau(s):=\sup\{t\mid (s,t)\subset I\}$, where the set in supremum is never empty as $I$ is relatively open in $(0,T]$. From Lemma \ref{lem2}, we see $\tau(s)\in I$. Then $(r,\tau(s)]$ is the connected component in $I$ containing $s$, for some $r<s$. Consequently $\tau(s)=T$, since   $I$ is relatively open. Thus we summarize that for each $s\in I$, $[s,T]\subset I$, which is trivial when $s=T$. Hence, the relatively open set $I$ only has  two options: either $[0,T]$ or $(s,T]$ for some $s\in [0,T)$. In light of the fact $u(x^0,t^0)=M$, or equivalently $t^0\in I$, we conclude that $I$ must be of the form $(s,T]$ for some $s\in[t^0,T)$, as required.  This finishes the proof of   Theorem \ref{SMPp}.
\end{proof}

\section{Examples}\label{exp}

We will give some examples in this section. These examples are all concerned with symmetric $\alpha$-stable L\'evy noise which are not covered in Taira's framework in \cite{KTaira}, since the jump measure is of unbounded support.

\begin{Example}[Mean exit time] \label{exit}
Consider a stochastic system in $\R^d$:
$$dX_t = b(X_t) dt + dW_t + dL_t^\alpha,$$
where $W_t$ is a standard Wiener process, and $L_t^\alpha$ is a L\'evy process with jump measure $\nu_\alpha(dz) = c_{\alpha,d} \frac{dz}{|z|^{d+\alpha}}$, for   $\alpha \in (0, 2)$ and $c_{\alpha,d}$ a positive constant depending on $\alpha$ and $d$, together with zero drift and zero diffusion. The generator for this system is the following elliptic Waldenfels operator
\begin{equation*}
  \begin{split}
     Lu(x) = & \frac12\Delta u(x)+b(x)\cdot\nabla u(x) \\
       & +\int_{\R^d\setminus\{0\}}\Big[u(x+z)-u(x)-\sum^d_{j=1}z_j \frac{\partial u}{\partial x_j}(x)\mathbf{1}_{\{|z|<1\}}\Big]\nu_\alpha(dz).
  \end{split}
\end{equation*}

Let $D$ be a domain in $\R^d$. The mean exit time for  $X_t$, starting at $x$, exits firstly from $D$ is denoted by $\tau(x)$. By   Dynkin formula for such jump diffusion process \cite{App,Kolok}, as shown in \cite{Dua15, Oks,Sa}, we know that $\tau$ satisfies the following equation,
\begin{equation*}
  \begin{cases}
    L \tau = -1 &\text{in }D, \\
    \tau = 0 &\text{in }D^c.
  \end{cases}
\end{equation*}
By the strong maximum principle \ref{smpe}, or precisely Corollary \ref{s_Lu<} with the special case $E=\R^d$, we conclude that the mean exit time $\tau$ cannot take zero value inside $D$,  unless it is constant (inside the domain $D$).
\end{Example}

\begin{Example}[Escape probability] \label{escape}
Similarly, let $U$ be a subset of $D^c$. The likelihood that $X_t$, starting at $x$, exits firstly from $D$ by landing in the target set $U$ is called the escape probability from $D$ to $U$, denoted by $p(x)$. As shown in \cite{Kan, Liao}, the escape probability $p$ satisfies the following equation,
\begin{equation*}
  \begin{cases}
    L p = 0 &\text{in }D, \\
    p = 1 &\text{in }U, \\
    p = 0 &\text{in }D^c\setminus U.
  \end{cases}
\end{equation*}
By Theorem \ref{smpe} and Corollary \ref{s_Lu<} with $E=\R^d$, we conclude that $p$ cannot take values of zero or one at any point inside $D$.
\end{Example}

\begin{Example}[Fokker-Planck equation] \label{FPE2013}
Consider a stochastic system in $\R^d$:
\begin{equation}\label{rand}
  \left\{
  \begin{aligned}
     dX_t & = b(X_t) dt + dW_t + dL_t^\alpha, \\
       X_0 & =x^0,
  \end{aligned}
  \right.
\end{equation}
where  $W_t$ is a standard Wiener process and $L_t^\alpha$ is a L\'evy process with pure jump measure $\nu_\alpha(dz) = c_{\alpha,d}\frac{dz}{|z|^{d+\alpha}}$, for   $\alpha \in (0, 2)$ and $c_{\alpha,d}$ a positive constant depending on $\alpha$ and $d$.
The Fokker-Planck equation for the probability density of the solution, as   shown in \cite{Dua15,Garroni},  is
\begin{equation}\label{FPE2014}
  \begin{split}
     \frac{\partial p}{\partial t} =&\ \frac12 \Delta p -  b(x) \cdot  \nabla p - (\nabla \cdot b) p \\
       & +\int_{\R^d\setminus\{0\}}\Big[p(x+z,t)-p(x,t)-\sum^d_{j=1}z_j\frac{\partial
p}{\partial x_j}(x,t)\mathbf{1}_{\{|z|<1\}}\Big]\nu_\alpha(dz).
  \end{split}
\end{equation}

Let $D$ be a domain in $\R^d$. In this case, the coefficient of zeroth-order term is $c=-\nabla\cdot b$. We apply the strong maximum principle in Theorem \ref{SMPp} and Corollary \ref{sp_<} with $E=\R^d$. If $\nabla\cdot b\equiv0$, which means the deterministic vector field of stochastic system (\ref{rand}) is divergence-free,
then the probability density $p$ cannot attain its maximum (or minimum) over $\R^d\times[0,\infty)$ in $D\times[0,\infty)$, unless it is constant at all time before the maximizer (or minimizer) point. Moreover,
if $\nabla\cdot b\ge0$,
then $p$ cannot attain its   maximum or zero minimum over $\R^d\times[0,\infty)$ in $D\times[0,\infty)$ (note that $p$ only takes nonnegative values), unless it is constant at all time before this point as well.
\end{Example}

\section{Appendix: Proofs for lemmas}\label{appendix}

The proofs of some technical lemmas will be presented here for the sake of completeness.

\subsection{Proof of Lemma \ref{lem1}}

  Assume on the contrary that  $D\times\{t^0\}$ contains some points at which $u=M$. Choose the point $(x^1,t^0)$ nearest to $(x^0,t^0)$ at the line $\{(x,t^0)\mid x\in D\}$, such that $u(x^1,t^0)=M$, which is possible since $D$ is connected and the set $\{(x,t)\in D_T\mid u(x,t)=M\}$ (or simply, $\{u=M\}$) is relatively closed in $D_T$. Denote by $l$ the line segment connecting $(x^0,t^0)$ with $(x^1,t^0)$, and set
  \begin{equation*}
    \delta=\min\{|x^1-x^0|,\text{dist}(l,\partial D)\}.
  \end{equation*}
  For $x\in l_0:=\{x\mid(x,t^0)\in l,0<|x-x^1|<\delta\}$, define
  \begin{equation*}
    \rho(x)=\text{dist}((x,t^0),(D_T)^\circ\cap\{u=M\}).
  \end{equation*}
  Obviously, $0<\rho(x)\le|x-x^1|$ in $l_0$. See Figure \ref{fig:Lemma1}.

\begin{figure}[!bth]
  \centering
  \includegraphics[width=.6\textwidth]{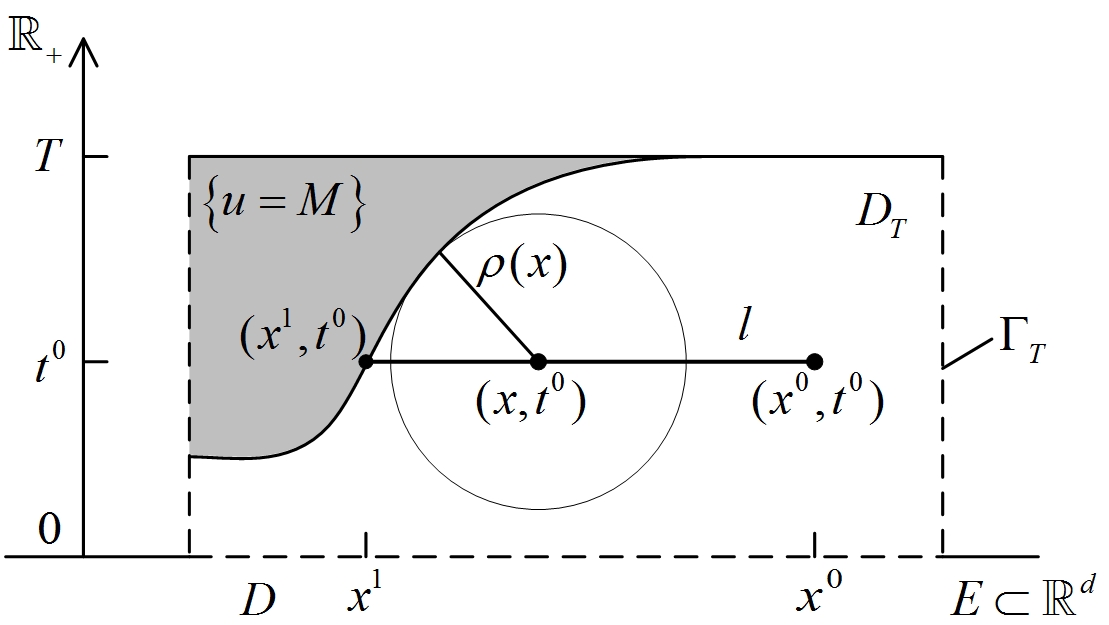}
  \caption{Sketch for Lemma \ref{lem1}.} \label{fig:Lemma1}
\end{figure}

  Consider next the open ball $B:=B((x,t^0),\rho(x))\subset\R^{d+1}$ with center $(x,t^0)$ and radius $\rho(x)$. Then $\overline{B}\subset D_T$, $u<M$ in $B$ and $\partial B$ contains points where $u=M$. Thus by Lemma \ref{step1} we conclude that either $u(x,t^0+\rho(x))=M$ or $u(x,t^0-\rho(x))=M$. By the Pythagorean theorem, we assert
  \begin{equation*}
    \rho(x+\epsilon e)^2\le\rho(x)^2+\epsilon^2,
  \end{equation*}
  for sufficiently small $|\epsilon|>0$, where $e$ is the unit vector along $l$.
  In the same way, $\rho(x)^2\le\rho(x+\epsilon e)^2+\epsilon^2$.
  We note that
  \begin{equation*}
    \lim_{\epsilon\to0} \frac{\sqrt{\rho(x)^2-\epsilon^2}-\rho(x)}{\epsilon} \le\lim_{\epsilon\to0}\frac{\rho(x+\epsilon e)-\rho(x)}{\epsilon}\le\lim_{\epsilon\to0} \frac{\sqrt{\rho(x)^2+\epsilon^2}-\rho(x)}{\epsilon},
  \end{equation*}
  which implies that
  \begin{equation*}
    \frac{d\sqrt{\rho(x)^2-\epsilon^2}}{d\epsilon}\Bigg|_{\epsilon=0} \le\frac{d\rho(x+\epsilon e)}{d\epsilon}\Bigg|_{\epsilon=0} \le\frac{d\sqrt{\rho(x)^2+\epsilon^2}}{d\epsilon}\Bigg|_{\epsilon=0}.
  \end{equation*}
This leads to
  \begin{equation*}
    \frac{d\rho(x+\epsilon e)}{d\epsilon}\Bigg|_{\epsilon=0}=0,
  \end{equation*}
  for each $x\in l_0$. Consequently, $\rho(x)$ is a constant in $l_0$, which is a contradiction since $\rho(x)>0$ in $l_0$ and $\rho(x)\to0$ as $x\to x^1$. The proof is complete.

\subsection{Proof of Lemma \ref{lem2}}

  Assume that there exists a point $x^1\in D$ such that $u(x^1,t^1)=M$. Let $\hat{B}:=B((x^1,t^1),r)\subset\R^{d+1}$ be such a small ball that $\hat{B}\cap(E\times(t^0,t^1))$ is contained in $D_T$. Define
  \begin{equation*}
    v(x,t):=e^{-|x-x^1|^2-\beta(t-t^1)}-1 \quad\text{for }(x,t)\in \overline{E}\times[t^0,t^1],
  \end{equation*}
  where $\beta$ is a positive constant as selected below. Then for $(x,t)\in \hat{B}\cap(E\times(t^0,t^1))$,
  \begin{equation*}
    \frac{\partial v}{\partial t} =\ -\beta e^{-|x-x^1|^2-\beta(t-t^1)},
  \end{equation*}
  and also
  \begin{align*}
    Av =&\ e^{-|x-x^1|^2-\beta(t-t^1)}\Big\{\text{tr}\big[\mathbf{a}^T \big(4(x-x^1)\otimes(x-x^1)-2I\big)\big]-2b^T(x-x^1) \\
       &\qquad\qquad\qquad\quad\ +c\big(1-e^{-\beta(t^1-t)+|x-x^1|^2}\big)\Big\} \\
       =&\ e^{-|x-x^1|^2-\beta(t-t^1)}\big[4(x-x^1)^T\mathbf{a}^T(x-x^1) -2\text{tr}(\mathbf{a})-2b^T(x-x^1) \\
       &\qquad\qquad\qquad\quad\ +c\big(1-e^{-\beta(t^1-t)+|x-x^1|^2}\big)\big] \\
       \ge&\ e^{-|x-x^1|^2-\beta(t-t^1)}\big[4\gamma|x-x^1|^2 -2\text{tr}(\mathbf{a})-2|b||x-x^1|-|c|\big],
  \end{align*}
  for  $\beta>0$ large enough. We set
  \begin{equation*}
    \big|4\gamma|x-x^1|^2 -2\text{tr}(\mathbf{a})-2|b||x-x^1|-|c|\big|\le C_1<\infty,
  \end{equation*}
  for some positive constant $C_1$,   independent of $\beta$.
  Moreover,  we have
  \begin{equation*}
    \begin{split}
       Kv(x,t)= &\ e^{-|x-x^1|^2-\beta(t-t^1)}\int_{\R^d\setminus\{0\}} \big[e^{-|x+z-x^1|^2+|x-x^1|^2}-1 \\
         & \qquad\qquad\qquad\qquad\qquad\ +2z^T(x-x^1)\mathbf{1}_{\{|z|<1\}}\big]\nu(t,x,dz).
    \end{split}
  \end{equation*}
 Using Taylor expansions and the fact that each $\nu(t,x,\cdot)$ is L\'evy measure, we know that
  \begin{equation*}
    \bigg|\int_{\R^d\setminus\{0\}} \big[e^{-|x+z-x^1|^2+|x-x^1|^2}-1+2z^T(x-x^1)\mathbf{1}_{\{|z|<1\}}\big]\nu(t,x,dz)\bigg|\le C_2<\infty,
  \end{equation*}
  where $C_2$ is a positive constant independent of $\beta$. Hence, in $\hat{B}\cap(E\times(t^0,t^1))$
  \begin{equation}\label{Ltv}
    \begin{split}
       -\frac{\partial v}{\partial t}+Lv & \ge-\frac{\partial v}{\partial t}+Av-|Kv| \\
         & \ge e^{-|x-x^1|^2-\beta(t-t^1)}(\beta-C_1-C_2) \\
         & \ge 0,
    \end{split}
  \end{equation}
  provided $\beta$ is sufficiently large.

  \begin{figure}[!bth]
    \centering
    \includegraphics[width=.6\textwidth]{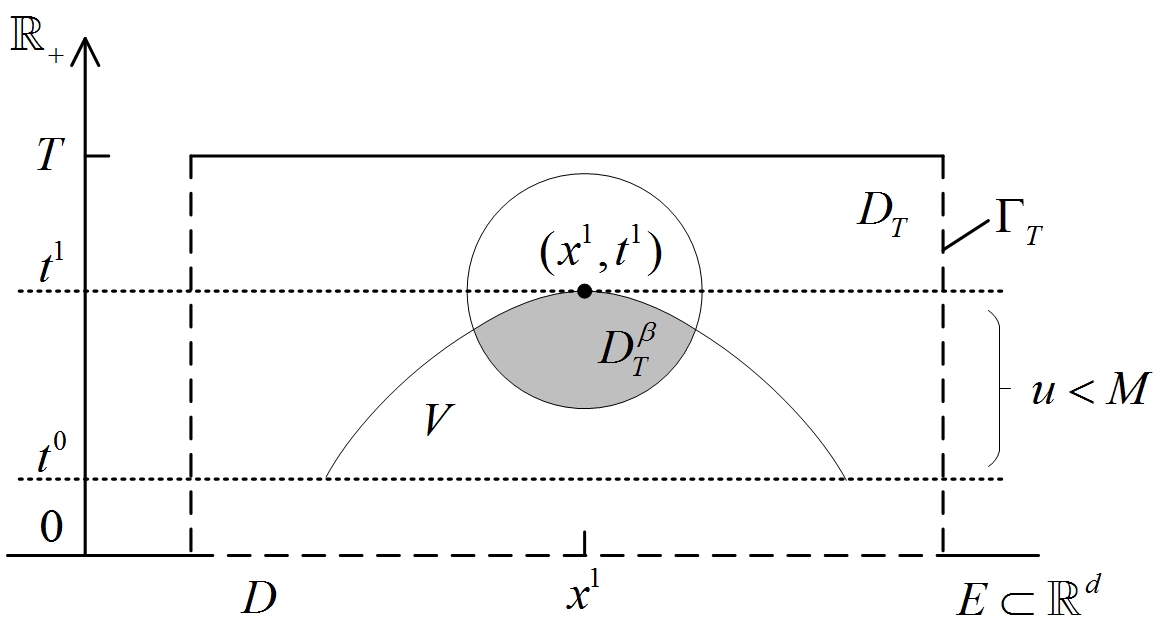}
    \caption{Sketch for Lemma \ref{lem2}.} \label{fig:Lemma2}
  \end{figure}

  Next limit our attention within the domain
  \begin{equation*}
    D_T^\beta:=\{(x,t)\in \hat{B}\mid|x-x^1|^2+\beta(t-t^1)<0\},
  \end{equation*}
  and define
  \begin{equation*}
    u^\epsilon=u+\epsilon v-M \quad\text{on }\overline{E}\times[t^0,t^1],
  \end{equation*}
  for a constant $\epsilon>0$. Then choosing $\epsilon$ small enough, we have
  \begin{equation*}
    u^\epsilon\le0 \quad \text{on }(\overline{E}\times[t^0,t^1])\setminus D_T^\beta,
  \end{equation*}
  since $v\le0$ on $\overline E\times[t^0,t^1]\setminus V$, and $u<M$ on $V\setminus D_T^\beta$, where $V:=\{(x,t)\in E\times(t^0,t^1)\mid|x-x^1|^2+\beta(t-t^1)<0\}= (E\times(t^0,t^1))\cap\{v>0\}$. See Figure \ref{fig:Lemma2}.

  Under Assumptions \ref{c1} and \ref{c2}, $c\equiv0$ in $D_T$, or $c\le0$ in $D_T$ also $M\ge0$, from (\ref{Ltv}) and the assumption $-\frac{\partial u}{\partial t}+Lu\ge0$ in $D_T$, we compute
  \begin{equation*}
    -\frac{\partial u^\epsilon}{\partial t}+Lu^\epsilon\ge-cM\ge0 \quad\text{in }D_T^\beta.
  \end{equation*}
  Then Remark \ref{natural}, or more precisely, the weak maximum principle for elliptic case in Theorem \ref{WMPe}, implies that $u^\epsilon\le0$ throughout $\overline E\times[t^0,t^1]$. But $u^\epsilon(x^1,t^1)=0$, and thus at the point $(x^1,t^1)$,
  \begin{equation*}
    0\le\frac{\partial u^\epsilon}{\partial t}=\frac{\partial u}{\partial t}+\epsilon\frac{\partial v}{\partial t}=\frac{\partial u}{\partial t}-\epsilon\beta,
  \end{equation*}
  that is, $\frac{\partial u}{\partial t}(x^1,t^1)>0$. However, as explained already in Theorem \ref{WMPe} or Theorem \ref{WMPp}, we know that $Lu\le0$ at point $(x^1,t^1)$. Hence
  \begin{equation*}
    -\frac{\partial u}{\partial t}+Lu<0 \quad\text{ at } (x^1,t^1).
  \end{equation*}
  This is a contradiction.

  Under Assumption \ref{c3}, we may use the same argument as in the end of the proof of Theorem \ref{smpe}, that is, replacing $L$ and $c$ respectively with $L-c^+$ and $c-c^+$ and applying the acquired result for Assumption \ref{c2}, the same result for this case follows.

\medskip

\textbf{Acknowledgement}. This work was partly supported by the National Science Foundation grant
DMS-1620449. We appreciate the helpful comments and suggestions of the anonymous reviewer.

\end{document}